\newtheorem{theorem}{Theorem}
\theoremstyle{plain}
\newtheorem{corollary}{Corollary}
\newtheorem{lemma}{Lemma}
\newtheorem{remark}{Remark}
\numberwithin{equation}{section}
\begin{document}
\title[KC spaces]{On low dimensional KC-spaces}
\author{Paul Fabel}
\address{Drawer MA, Department of Mathematics and Statistics, Mississippi
State University, MS 39762}
\email{fabelATra.msstate.edu}
\date{February 19 2011}
\subjclass[2000]{Primary 05C38, 15A15; Secondary 05A15, 15A18}
\keywords{}

\begin{abstract}
The KC property, a separation axiom between weakly Hausdorff and Hausdorff,
requires compact subsets to be closed. Various assumptions involving local
conditions, dimension, connectivity, and homotopy show certain KC-spaces are
in fact Hausdorff. Several low dimensional examples of compact, connected,
non-Hausdorff KC-spaces are exhibited in which the nested intersection of
compact connected subsets fails to be connected.
\end{abstract}

\maketitle

\section{\protect\bigskip Introduction}

What are the strongest properties permitted of a space $X,$ if the nested
intersection of compact connected subsets of $X$ can fail to be connected? $%
X $ must be non-Hausdorff, but examples otherwise rich in structure are not
so obvious since for example, standard texts such as \cite{Munkres} devote
little attention to such spaces.

To weaken slightly the Hausdorff condition, we turn to \textbf{KC-spaces},
spaces in which compact subspaces are closed. We construct a variety of
non-Hausdorff KC-spaces in which that nested intersection of closed compact
connected subspaces fails to be connected. We establish two theorems which
eliminate certain classes of non-Hausdorff $KC$ examples, and finally we
observe that the paper's content applies to \textbf{WH, }the category of
weakly Hausdorff spaces.

For historical background, Example 99 \cite{Steen} shows why KC-spaces can
be non-Hausdorff. \ More generally a technique for constructing KC-spaces is
implicit in the 1967 paper by Wilansky \cite{Wil} which shows the
Alexandroff compactification of a $k+$KC-space is again a KC-space.
KC-spaces are also called \textbf{maximal compact} spaces, and in certain
contexts such spaces are guaranteed to be Hausdorff. For example the 1985
the paper of A.H. Stone \cite{Stone} shows that 1st countable maximal
compact spaces are in fact $T_{2}.$ The general theory of KC-spaces has
continued to develop over the last decade \cite{Nyikos} \cite{Sun} and
remains an active area of research. Recent advances include a proof of a
long standing conjecture that minimal $KC$-spaces are compact \cite{Bella},
and several questions posed in \cite{myst} are settled in \cite{Bald}. 

In the paper at hand Theorem \ref{t2rite} shows every simply connected,
locally path connected, 1-dimensional KC-space is $T_{2}$. Theorem \ref%
{t2roid} shows the Hausdorff property is also guaranteed in KC-spaces which
are locally connected by continua and in which compact connected subsets
always have connected intersection. Corollaries \ref{ritecor} and \ref%
{dendroid} yield new criteria by which dendrites and certain dendroids can
be recognized.

The remainder of the paper demonstrates, via example, the futility of
weakening the hypotheses of Theorems \ref{t2rite} and \ref{t2roid}. We
exhibit a series of non-Hausdorff examples typically constructed as the
Alexandroff compactification $W\cup \{\infty \}$ of a $T_{2}$ space $W$ such
that $W$ fails to be locally compact at precisely one point. Subsections \ref%
{1n}, \ref{1uni}, \ref{1cont}, and \ref{1lcc} exhibit 1-dimensional
counterexamples. Subsections \ref{2d} and \ref{2dnice} exhibit non-Hausdorff
2-dimensional KC-spaces, the latter of which is both contractible and
locally contractible.

The general relevance of $KC$-spaces is bolstered by the following
observations. Every $KC-$space $X$ is \textbf{weakly Hausdorff  }, (i.e.
maps from Hausdorff compacta into $X$ have closed image in $X$\textbf{). }In
various contexts, weakly Hausdorff spaces are better behaved than Hausdorff
spaces \cite{St}. For example, as noted (\cite{May2} (p. 485)) in reference
to Peter May's book \textquotedblleft The geometry of iterated loops
spaces\textquotedblright \cite{May2} (p. 485) \textquotedblleft The weak
Hausdorff rather than the Hausdoff property should be required....in order
to validate some of the limit arguments used in \cite{May1}%
.\textquotedblright\ In particular the Hausdorff property fails in general
to be preserved (with the direct limit topology) by the union of closed
Hausdorff subspaces \cite{Herrlich}. On the other hand, by definition, the $%
KC$ \ property is presevered under unions (with the direct limit topology)
of closed $KC$-subspaces.

This paper suggests two open questions. Is the $n-$connected example from
subsection \ref{2d} contractible? Does there exist a 1-dimensional
contractible non-Hausdorff KC-space?

\section{Definitions}

\bigskip A \textbf{continuum }is a compact connected Hausdorff (T$_{2}$)
space, and in particular we allow that a continuum is not metrizable.

A \textbf{Peano continuum} is a compact, connected, locally path connected
metrizable space.

The continuum $Y$ is \textbf{hereditarily unicoherent }if $C\cap D$ is
connected whenever $C$ and $D$ are subcontinua of $Y.$

The space $Y$ is \textbf{generalized hereditarily unicoherent }if $C\cap D$
is connected whenever $C$ and $D$ are closed compact connected subspaces of $%
Y.$

A space $X$ is a \textbf{KC-space} if each compact subspace of $X$ is closed
in $X$.

A space $X$ is a \textbf{k-space} if $A$ is closed in $X$ whenever $A\cap K$
is closed for all compact closed sets $K\subset X.$

An \textbf{arc }is a space homeomorphic to $[0,1].$

A \textbf{dendroid }is a hereditarily unicoherent continuum. (Note every
dendroid is $T_{2}$ but not necessarily metrizable).

A \textbf{dendrite} is a locally path connected continuum which contains no
simple closed curve. (Note every dendrite is $T_{2}$ but not necessarily
metrizable).

If $(X,\mathcal{T}_{X})$ and $\infty \notin X$ and if $Y=X\cup \{\infty \}$
then the \textbf{Alexandroff compactification of }$X$ is the space $(Y,%
\mathcal{T}_{Y})$ such $\mathcal{T}_{Y}$ is union of $\mathcal{T}_{X}$ and
sets $V$ such that $V\subset Y$ and $Y\backslash V$ is compact and closed in 
$X.$

The space $X$ is \textbf{locally compact} if for each $x\in X$ there exists
an open set $U$ such that $x\in U$ and $\overline{U}$ is compact.

The space $X$ has (covering) \textbf{dimension n,} if $n$ is minimal such
that for each open covering $\mathcal{G}$ of $X$ there exists an open
covering $\mathcal{G}_{1}$ of $X$ such that if $U\in \mathcal{G}_{1}$ there
exists $V\in \mathcal{G}$ such that $U\subset V,$ and such that each element 
$x\in X$ belongs to at most $n+1$ distinct sets of the collection $\mathcal{G%
}_{1}.$

The space $X$ is \textbf{connected by continua }if for each pair $%
\{x,y\}\subset X$ there exists a continuum $Z$ and a map $f:Z\rightarrow X$
such that $\{x,y\}\subset im(f).$ The space $X$ is \textbf{locally connected
by continua (}denoted \textbf{lcc) }if\textbf{\ }for each $x\in X$ there
exists an open set $U$ such that $x\in U$ and $U$ is connected by continua.

\begin{remark}
Familiar examples (such as the Alexandroff compactification of the rationals
discussed in subsection \ref{alexq}) show that `connected' is strictly
weaker than `connected by continua'. The property `locally connected by
continua' is at least as strong as `locally connected'. However if $X$ is
locally connected, it need not be the case that maps of ordered continua are
adequate to connect distinct points of $X$ \cite{Cornette} \cite{Mardesic}.
\end{remark}

\section{Promoting KC-spaces to Hausdorff}

The main results of this section are Theorems \ref{t2rite} and \ref{t2roid},
Corollaries \ref{ritecor} and \ref{dendroid}, which establish conditions
under which certain KC-spaces are necessarily Hausdorff.

\begin{lemma}
\label{kcez}Suppose $Y$ is a KC-space. If $Z$ is a compact $T_{2}$ space and 
$f:Z\rightarrow Y$ is continuous then $im(f)$ is a $T_{2}$ subspace of $Y.$
Each path connected subspace of $Y$ is arcwise connected.
\end{lemma}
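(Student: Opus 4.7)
The plan is to prove the two assertions in order, with the first doing essentially all of the work and the second following by a classical result.

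For the first assertion, given distinct $a,b \in im(f)$, I aim to produce disjoint open neighborhoods of $a$ and $b$ inside $im(f)$. Observe that singletons are compact, so the KC hypothesis forces them to be closed in $Y$; consequently $A = f^{-1}(a)$ and $B = f^{-1}(b)$ are disjoint closed, hence compact, subsets of the compact $T_{2}$ space $Z$. Using normality of $Z$, separate $A$ and $B$ by disjoint open sets $U \supset A$ and $V \supset B$ in $Z$. The complements $C = Z \setminus U$ and $D = Z \setminus V$ are compact, so $f(C)$ and $f(D)$ are compact in $Y$, and therefore closed in $Y$ by the KC property. Set $U' = Y \setminus f(C)$ and $V' = Y \setminus f(D)$; these are open in $Y$, contain $a$ and $b$ respectively (because $A \cap C = B \cap D = \emptyset$), and $U' \cap V' \cap im(f) = \emptyset$, since any common point would have a non-empty preimage lying in $U \cap V = \emptyset$. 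This yields the desired separation in the subspace $im(f)$.

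For the second assertion, suppose $X \subset Y$ is path connected and that $a \neq b$ are points of $X$ joined by a path $p:[0,1] \to X$. Applying the first assertion with $Z = [0,1]$ shows that $im(p)$ is a compact, connected, Hausdorff subspace of $X$. The classical theorem that in a Hausdorff space the image of any path between distinct endpoints contains an arc between those endpoints (see, e.g., Willard, \emph{General Topology}, Theorem 31.2) then produces an arc in $im(p) \subset X$ joining $a$ to $b$.

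The main obstacle is the first assertion: the naive attempt to push $U$ and $V$ forward as $f(U)$ and $f(V)$ fails because direct images need not be open in $Y$. The trick is instead to push the compact complements $Z \setminus U$ and $Z \setminus V$ forward, where the KC hypothesis converts compact to closed, producing open neighborhoods on the $Y$ side after taking complements. This is the only point at which the KC hypothesis is used; the rest of the argument is formal.
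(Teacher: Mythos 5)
Your proof is correct and follows essentially the same route as the paper: separate the fibers $f^{-1}(a)$ and $f^{-1}(b)$ by normality of $Z$, push the compact complements forward, use KC to make their images closed, and take complements in $Y$ (the paper phrases the separating sets as $im(f)\setminus f(K)$ and $im(f)\setminus f(C)$, which are exactly your $U'\cap im(f)$ and $V'\cap im(f)$). The second assertion is likewise handled identically, by applying the first part to a path and invoking the classical arcwise-connectivity of path-connected Hausdorff spaces.
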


\begin{proof}
Suppose $f:Z\rightarrow Y$ is a map and $Z$ is a compact $T_{2}$ space.
Suppose $\{a,b\}\subset im(f)$ and $a\neq b.$ Let $A=f^{-1}(\{a\})$ and $%
B=f^{-1}(\{b\}).$ Since $Z$ is compact and $T_{2},$ $Z$ is normal. Apply
normality of $Z$ to obtain disjoint open sets $U$ and $V$ in $Z$ such that $%
A\subset U,$ $B\subset V$ and $U\cap V=\emptyset .$ Let $K=Z\backslash U$
and $C=Z\backslash V.$ Then $Z=K\cup C$ and thus $im(f)=f(K)\cup f(C).$
Moreover, since each of $f(K)$ and $f(C)$ is compact, and since $Y$ is a
KC-space, each of $f(K)$ and $f(C)$ is closed in $Y.$ Thus $im(f)\backslash
f(K)$ and $im(f)\backslash f(C)$ establish the $T_{2}$ property of $im(f).$

Suppose $A\subset Y$ and $A$ is path connected and $\{a,b\}\subset A$ \ and $%
a\neq b.$ Obtain a path $\alpha :[0,1]\rightarrow A$ such that $\alpha (0)=a$
and $\alpha (1)=b.$ Since $[0,1]$ is a compact $T_{2}$ space, $im(\alpha )$
is a path connected $T_{2}$ subspace of $A$ and hence $im(\alpha )$ is
arcwise connected. Thus $A$ is arcwise connected.
\end{proof}

\begin{lemma}
\label{scc}Suppose $X$ is a 1-dimensional KC-space. Then $X$ is aspherical,
and moreover $X$ is simply connected if and only if $X$ contains no simple
closed curve.
\end{lemma}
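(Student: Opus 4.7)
The plan is to reduce statements about maps $S^n\to X$ to statements about their images, which by Lemma \ref{kcez} are compact $T_2$ subspaces of $X$ and hence tractable by classical continuum theory. Given $f:S^n\to X$ with $n\ge 1$, set $K=f(S^n)$. Then $f$ is a quotient map onto the Hausdorff space $K$, and precomposing with a Peano curve $[0,1]\to S^n$ exhibits $K$ as a Hausdorff continuous image of $[0,1]$; by the Hahn-Mazurkiewicz theorem $K$ is a Peano continuum. Since $X$ is KC, $K$ is closed in $X$, and monotonicity of covering dimension on closed subspaces gives $\dim K\le \dim X=1$. So in every case the image is a one-dimensional Peano continuum, and the problem is reduced to facts about such continua.

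For asphericity with $n\ge 2$, invoke the classical theorem that every one-dimensional Peano continuum is aspherical: $f$ is null-homotopic in $K$, hence in $X$.

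For the simple-connectivity criterion, the direction ($\Leftarrow$) is the easier half: if $X$ contains no simple closed curve and $\alpha:S^1\to X$ is a loop, then $K=\alpha(S^1)$ is a one-dimensional Peano continuum containing no simple closed curve (any such curve in $K$ would a fortiori lie in $X$). Hence $K$ is a dendrite, so is contractible, and $\alpha$ is null-homotopic. For the converse, suppose $X$ contains a simple closed curve $S$. If $X$ were simply connected, the inclusion $S\hookrightarrow X$ would extend to some $H:D^2\to X$; then $K=H(D^2)$ would be a one-dimensional Peano continuum in which the embedded circle $S$ is null-homotopic, contradicting the classical fact that an embedded $S^1$ in a one-dimensional Peano continuum is never null-homotopic (e.g.\ because the natural map $\pi_1\to\check\pi_1$ is injective for such spaces, or via an explicit retraction onto $S$).

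The main technical obstacle is the appeal to classical one-dimensional continuum theory, namely asphericity of 1-dimensional Peano continua and essentiality of embedded circles in their fundamental groups. Both are standard but nontrivial results stemming from the shape-theoretic ``graph-like'' nature of such spaces, and the proof's cleanness depends on invoking them as black boxes rather than reproving them.
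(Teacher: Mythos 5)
Your argument is correct and follows essentially the same route as the paper: reduce to the image $K=f(S^n)$, use Lemma \ref{kcez} plus Hahn--Mazurkiewicz to see $K$ is an (at most) one-dimensional Peano continuum, then quote the classical facts that such continua are aspherical, that those without simple closed curves are contractible, and that an embedded circle in one is essential (the paper cites Curtis--Fort and the retraction theorem of Cannon--Conner for exactly these black boxes). The only difference is cosmetic: you spell out the dimension bound via closedness of $K$ and monotonicity of covering dimension, which the paper leaves implicit.
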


\begin{proof}
To see that $X$ is aspherical suppose $f:S^{n}\rightarrow X$ is a map and $%
n\geq 2.$ Then $im(f)$ is $T_{2}$ by Lemma \ref{kcez}. By the Hahn
Mazurkiewicz theorem \cite{Maz} $im(f)$ is a 1 dimensional Peano continuum,
and hence $im(f)$ is aspherical (Cor. p578 \cite{Fort}). Thus $X$ is
aspherical.

If $X$ is simply connected then, to obtain a contradiction, suppose $X$
contains a simple closed curve $S\subset X.$ Since $X$ is simply connected
there exists a map $f:D^{2}\rightarrow X$ such that $f_{\partial D^{2}}$ is
an embedding onto $S.$ By the Hahn Mazurkiewicz theorem and lemma \ref{kcez} 
$im(f),$ is a one dimensional Peano continuum. Hence $S$ is a retract of $%
im(f)$ (Thm 3.1 \cite{Conner}), and hence the loop $S$ is both essential and
inessential in $im(f)$ and we have a contradiction.

Conversely suppose $X$ contains no simple closed curve and $f:\partial
D^{2}\rightarrow X$ is any map. By the Hahn Mazurkiewicz theorem and lemma %
\ref{kcez} $im(f)$ is a 1 dimensional Peano continuum which contains no
simple closed curve and thus $im(f)$ is contractible (Thm. p. 578 \cite{Fort}%
) and in particular $f$ is inessential in $X$. Thus $X$ is simply connected.
\end{proof}

\begin{theorem}
\label{t2rite}Suppose $X$ is a locally path connected KC-space and suppose $X
$ contains no simple closed curve. Then $X$ is Hausdorff.
\end{theorem}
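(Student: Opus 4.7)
The plan is by contradiction: assuming $X$ is not Hausdorff, I will manufacture a simple closed curve in $X$. First, pick distinct points $a, b \in X$ that cannot be separated by disjoint open sets. Singletons are compact and hence closed by KC, so local path connectedness supplies path-connected open sets $U$ and $V$ with $a \in U \subset X \setminus \{b\}$ and $b \in V \subset X \setminus \{a\}$. Because $a$ and $b$ admit no Hausdorff separation, $U \cap V \neq \emptyset$, so choosing $x$ in the intersection and concatenating a path from $a$ to $x$ in $U$ with one from $x$ to $b$ in $V$ yields a path from $a$ to $b$; Lemma~\ref{kcez} then upgrades it to an arc $\gamma \colon [0,1] \to X$ with $\gamma(0) = a$ and $\gamma(1) = b$.

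Next, I would build a second arc from $a$ to $b$ that avoids the middle of $\gamma$. Let $K = \gamma([1/3, 2/3])$; it is compact, hence closed in $X$, so $X \setminus K$ is open and contains both $a$ and $b$. Repeating the previous construction inside the open subset $X \setminus K$ (itself locally path connected, with $a, b$ still inseparable) yields an arc $\delta \colon [0,1] \to X \setminus K$ from $a$ to $b$.

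Finally, I would carve a simple closed curve out of $\gamma \cup \delta$. Define
\[
t_1 = \sup\{ t \in [0,1] : \delta(t) \in \gamma([0, 1/3]) \}, \qquad t_2 = \inf\{ t \in [t_1, 1] : \delta(t) \in \gamma([2/3, 1]) \};
\]
both extrema are attained as preimages of closed sets under $\delta$, and $t_1 < t_2$ by injectivity of $\gamma$. The open piece $\delta((t_1, t_2))$ misses $\gamma([0, 1/3])$ by maximality of $t_1$, misses $\gamma([2/3, 1])$ by minimality of $t_2$, and misses $K$ because $\delta \subset X \setminus K$, so it is disjoint from all of $\gamma([0,1])$. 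Writing $\gamma(s_i) = \delta(t_i)$, the sub-arcs $\gamma([s_1, s_2])$ and $\delta([t_1, t_2])$ therefore meet exactly at their two endpoints. Their union is a continuous injective image of $S^{1}$ in the compact Hausdorff space $\gamma \cup \delta$ (Hausdorff by Lemma~\ref{kcez}), hence homeomorphic to $S^{1}$, producing a genuine simple closed curve in $X$ and contradicting the hypothesis.

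The main technical hurdle will be the bookkeeping in this last step: verifying attainment of the $t_i$, the strict inequality $t_1 < t_2$, and that the two chosen sub-arcs really meet only at their endpoints. Everything else reduces to Lemma~\ref{kcez} combined with the basic observation that, given any compact set disjoint from $a$ and $b$, the entire neighborhood construction can be run inside its open complement.
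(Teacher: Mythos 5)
Your proof is correct, and it reaches the same engine as the paper --- two arcs with the same endpoints that differ somewhere must yield a simple closed curve, with Lemma~\ref{kcez} supplying both the arcs and the Hausdorffness of their union --- but the surrounding architecture is genuinely different. The paper argues directly rather than by contradiction: it first notes components are open, then takes a single arc $\alpha$ from $a$ to $b$, picks an interior point $k$, and shows $a$ and $b$ lie in \emph{distinct components} of $Y\setminus\{k\}$; those components are the desired separating open sets, and the simple closed curve appears only inside the subsidiary contradiction showing the components are distinct. Your version instead runs the whole argument by contradiction from inseparability: you use the overlap of path-connected neighborhoods of two inseparable points to manufacture the first arc (a step the paper does not need, since it works inside a path component), and you delete an entire middle subarc $K=\gamma([1/3,2/3])$ rather than a single point before producing the second arc. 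Deleting a subarc buys you a cleaner extraction of the simple closed curve --- your sup/inf bookkeeping with the three thirds of $\gamma$ is tighter than the paper's rather terse "component of $\alpha\setminus\beta$ containing $k$" step --- at the cost of never exhibiting the separating open sets explicitly. Both arguments are complete; the paper's has the mild advantage of being constructive about the separation, yours the advantage of a more carefully verified curve-extraction step. One small point worth making explicit when you write this up: inseparability of $a$ and $b$ in $X$ passes to the open subspace $X\setminus K$ precisely because open subsets of $X\setminus K$ are open in $X$; you use this implicitly and it is exactly right.
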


\begin{proof}
Suppose $\{a,b\}\subset X$ and $a\neq b.$ Since $X$ is locally path
connected the components of $X$ are open and thus if $a$ and $b$ belong to
distinct components $Y_{a}$ and $Y_{b},$ the open sets $Y_{a}$ and $Y_{b}$
separate $a$ and $b.$

For the remaining case suppose $Y$ is a component of $X$ and $\{a,b\}\subset
Y.$ Then $Y$ is path connected since $Y$ is connected and locally path
connected. Thus $Y$ is arcwise connected by Lemma \ref{kcez}, and let the
arc $\alpha \subset Y$ have endpoints $\{a,b\}.$ Let $k\in \alpha \backslash
\{a,b\}.$ We claim $a$ and $b$ belong to distinct components $U$ and $V$ of $%
Y\backslash \{k\}.$

To obtain a contradiction suppose $U$ is a component of $Y\backslash \{k\}$
and $\{a,b\}\subset U.$ Since $U$ is open, $U$ is locally path connected.
Thus $U$ is path connected since $U$ is connected. By Lemma \ref{kcez} there
exists an arc $\beta \subset U$ with endpoints $\{a,b\}.$ Note $k\notin
\beta .$ Let $J$ denote the component of $\alpha \backslash \beta $ such
that $k\in J.$ Let $\{x,y\}$ denote the endpoints of $J$ in $\alpha .$ Let $%
I $ denote the component of $\beta \backslash \{x,y\}$ with endpoints $%
\{x,y\}. $ Observe $J\cup I\cup \{x,y\}$ is a simple closed curve and we
have a contradiction.

Let $U$ and $V$ denote the components of $Y\backslash \{k\}$ such that $a\in
U$ and $b\in V.$ Recall $Y$ is open in $X$ and $\{k\}$ is closed in $X.$
Thus $U$ and $V$ are open in $X$ and this proves $X$ is $T_{2}.$
\end{proof}

Combining Theorem \ref{t2rite} and Lemma \ref{scc} we obtain the following.

\begin{corollary}
\label{ritecor}If $X$ is a locally path connected, simply connected,
1-dimensional KC-space then $X$ is $T_{2},$ hence if $X$ is also compact and
connected then $X$ is a dendrite.
\end{corollary}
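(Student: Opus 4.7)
The plan is to chain together Lemma \ref{scc} and Theorem \ref{t2rite}. Since $X$ is assumed to be a $1$-dimensional KC-space that is simply connected, the second assertion of Lemma \ref{scc} applies and yields that $X$ contains no simple closed curve. With this in hand, the hypotheses of Theorem \ref{t2rite} are met: $X$ is a locally path connected KC-space with no simple closed curve, and the theorem therefore gives that $X$ is Hausdorff.

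For the second half, suppose in addition that $X$ is compact and connected. Since $X$ has now been shown to be $T_{2}$, it is a continuum in the sense of this paper. By the first part $X$ is locally path connected and contains no simple closed curve, which matches the paper's definition of a dendrite verbatim, so the conclusion is immediate. There is no real obstacle here; both ingredients have already been done, and the only thing to check is that the hypotheses of Lemma \ref{scc} and Theorem \ref{t2rite} are satisfied in the right order, which they are.
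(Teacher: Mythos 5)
Your proof is correct and follows exactly the route the paper intends: the corollary is stated immediately after the remark ``Combining Theorem \ref{t2rite} and Lemma \ref{scc} we obtain the following,'' and your argument is precisely that combination, with the dendrite conclusion read off from the paper's definition. Nothing further is needed.
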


By definition, every hereditarily unicoherent continuum contains no simple
closed curve, and every locally path connected space is $lcc.$ Pairwise
replacement of the corresponding notions in the hypothesis of Theorem \ref%
{t2rite} yields the following theorem.

\begin{theorem}
\label{t2roid} Suppose the KC-space $X$ is generalized hereditarily
unicoherent and suppose $X$ is locally connected by continua$.$ Then $X$ is
Hausdorff.
\end{theorem}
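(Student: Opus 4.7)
The plan is to mirror the proof of Theorem \ref{t2rite}, substituting an \emph{irreducible} subcontinuum from $a$ to $b$ for the arc $\alpha$, and substituting the generalized hereditary unicoherence hypothesis for the no-simple-closed-curve contradiction. Given distinct $\{a,b\}\subset X$, the easy case is when $a,b$ lie in different components of $X$; lcc makes components open and separates them. Assume then that $\{a,b\}\subset Y$ for one component $Y$. Declare $x\sim y$ when some continuum in $X$ contains both; reflexivity and symmetry are trivial, and transitivity follows from the compact Hausdorff wedge of two continua at a common preimage of the shared point. The lcc hypothesis makes equivalence classes open, so connectedness of $Y$ forces it to be a single class, and Lemma \ref{kcez} delivers a subcontinuum $C_0\subset Y$ containing $\{a,b\}$.

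Next I construct an irreducible subcontinuum $I$ joining $a$ to $b$. Let $\mathcal F$ denote the family of subcontinua of $X$ containing $\{a,b\}$. For $C_1,C_2\in\mathcal F$, the intersection $C_1\cap C_2$ is closed, compact (as a closed subspace of $C_1$), Hausdorff (as a subspace of $C_1$), connected (by generalized hereditary unicoherence), and contains $\{a,b\}$; so $\mathcal F$ is directed under reverse inclusion. Let $I=\bigcap_{C\in\mathcal F}C$. The main technical step is to show $I$ is connected. Working entirely inside the compact Hausdorff space $C_0$, suppose $I=A\sqcup B$ with $A,B$ disjoint, closed, and nonempty in $I$. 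Use normality of $C_0$ to thicken $A,B$ into disjoint open sets $U_A,U_B\subset C_0$; the compact set $C_0\setminus(U_A\cup U_B)$ is disjoint from $I$, hence covered by $\{C_0\setminus C:C\in\mathcal F\}$. Extracting a finite subcover and invoking directedness produces a single $C^{*}\in\mathcal F$ with $C^{*}\cap C_0\subset U_A\cup U_B$. Since $C^{*}\cap C_0$ is connected and contains $I$, it sits inside one of $U_A,U_B$, contradicting that $I$ meets both. Hence $I\in\mathcal F$, and $I\subset C$ for every $C\in\mathcal F$.

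Now pick $k\in I\setminus\{a,b\}$, which exists because any connected Hausdorff space with two distinct points has at least three. The claim, parallel to the simple-closed-curve step of Theorem \ref{t2rite}, is that $a$ and $b$ lie in distinct components of $Y\setminus\{k\}$. If they both lay in a single component $U$, then $U$ would be open in $X$ (components of open sets are open, from lcc), and the equivalence-class argument repeated inside $U$ would furnish a subcontinuum $D$ with $\{a,b\}\subset D\subset U\subset Y\setminus\{k\}$. But then $D\in\mathcal F$, forcing $I\subset D$ by irreducibility, which contradicts $k\in I\setminus D$. Therefore $a,b$ lie in distinct components $U,V$ of $Y\setminus\{k\}$, both open in $X$; these separate $a$ from $b$, completing the proof.

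The principal obstacle I anticipate is establishing connectedness of $I$: the normality argument must be run inside the Hausdorff compactum $C_0$ rather than in the non-Hausdorff ambient $X$, and one must first verify that $\mathcal F$ is directed under intersection, which uses generalized hereditary unicoherence in tandem with the KC hypothesis. Verifying that $Y$, and later $U$, is connected by continua from the mere lcc hypothesis is conceptually clean but needs to be spelled out explicitly via the equivalence-class argument above.
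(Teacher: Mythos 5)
Your proposal is correct, and its overall architecture coincides with the paper's: dispose of the case of distinct components via openness of components, use the lcc hypothesis to produce a continuum through $a$ and $b$, pass to the ``irreducible core'' obtained by intersecting a family of compact connected sets containing $\{a,b\}$, pick $k$ in that core distinct from $a,b$, and use minimality of the core to show $a$ and $b$ lie in different components of $Y\setminus\{k\}$. The one place you genuinely diverge is the connectedness of the intersection. The paper restricts to the family $B$ of subcontinua of a single continuum $im(f)$, invokes Lemma \ref{uni} (a Zorn's-lemma argument resting on the standard, unproved fact that a chain of subcontinua of a Hausdorff compactum has connected intersection), and then reconciles $\bigcap B$ with the intersection $\alpha$ of the larger family $A$ of all compact connected subsets of $Y$ containing $\{a,b\}$ by noting $im(f)\cap\gamma\in A\cap B$. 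You instead work with the single family $\mathcal{F}$ of all subcontinua of $X$ through $\{a,b\}$, verify it is directed using generalized hereditary unicoherence together with the KC property (which makes each member closed, hence makes pairwise intersections compact, Hausdorff and connected), and prove connectedness of $\bigcap\mathcal{F}$ directly by the normality-plus-finite-subcover argument inside the fixed Hausdorff compactum $C_0$. This buys a self-contained argument that avoids Zorn's lemma and the two-family comparison, and it supplies exactly the compactness detail that Lemma \ref{uni} leaves implicit; the cost is redoing by hand what the paper delegates to a lemma. Your equivalence-class construction is the content of Lemmas \ref{q}, \ref{lcc} and \ref{open}, and your final contradiction ($D\in\mathcal{F}$ with $k\in I\setminus D$) is the paper's $\gamma\in A$ with $k\in\alpha\setminus\gamma$. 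One shared caveat, not a defect relative to the paper: both arguments assume the open component $U$ of $Y\setminus\{k\}$ inherits the lcc property, which requires reading \emph{locally connected by continua} as a neighborhood-basis condition rather than the literal one-neighborhood-per-point definition stated in the paper.
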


\begin{proof}
Suppose $\{a,b\}\subset X$ and $a\neq b.$ Since $X$ is lcc, Lemma \ref{open}
ensures the components of $Y$ are open. Thus if $A$ and $B$ belong to
distinct components $Y_{a}$ and $Y_{b}$ then the open sets $Y_{a}$ and $%
Y_{b} $ separate $a$ and $b.$

Suppose $\{a,b\}$ belong to some component $Y\subset X.$ Then since $Y$ is
connected and lcc, $Y$ is connected by continua by Lemma \ref{lcc}. Obtain a
continuum $Z$ and a map $f:Z\rightarrow Y$ such that $\{a,b\}\subset im(f).$
Since $Y$ is a KC-space and $Z$ is a continuum $im(f)$ is a continuum by
Lemma \ref{kcez}.

Let $A$ denote the collection of all compact connected sets in $Y$ which
contain $\{a,b\}$ and let $B$ denote the collection of all subcontinua of $%
im(f)$ which contain $\{a,b\}.$

Let $\alpha $ denote the intersection of all sets in $A$ and let $\beta $
denote the intersection of all sets in $B.$Then $\alpha \subset \beta $
since $B\subset A$. On the other hand if $\gamma \in A$ then $im(f)\cap
\gamma \in A\cap B$ and hence $\beta \subset \alpha .$ Thus $\alpha =\beta .$
Note $\beta $ is a continuum by Lemma \ref{uni}.

Since $Y$ is $T_{1},$ $\{a,b\}$ is not connected and there exists $k\in
\beta \backslash \{a,b\}.$ Now we claim $a$ and $b$ belong to distinct
components $U$ and $V$ of $Y\backslash \{k\}.$ To obtain a contradiction
suppose there exists a component $U\subset Y\backslash \{k\}$ such that $%
\{a,b\}\subset U.$ Then $U$ is open since $Y$ is $T_{1}.$

Since $U$ is connected and $lcc,$ $U$ is connected by continua (by Lemma \ref%
{lcc}.) In particular there exists a compact connected set $\gamma \subset U$
such that $\{a,b\}\subset \gamma .$ Note $\gamma \in A$ and hence $\alpha
\subset \gamma .$ On the other hand $k\in \alpha \backslash \gamma $ and we
have a contradiction.

Thus $a$ and $b$ belong to distinct components $U$ and $V$ of $Y\backslash
\{k\}.$ Since $Y$ is open in $X$ and since $\{k\}$ is closed in $X$ it
follows that $U$ and $V$ are open in $X$ and this proves $X$ is Hausdorff.
\end{proof}

This yields immediately alternate criteria for dendroid recognition.

\begin{corollary}
\label{dendroid}Suppose the compact KC-space $X$ is connected, generalized
hereditarily unicoherent and suppose $X$ is locally connected by continua$.$
Then $X$ is a dendroid.
\end{corollary}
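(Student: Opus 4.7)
The plan is to extract this corollary as a nearly immediate consequence of Theorem~\ref{t2roid} combined with the definitions of \emph{dendroid} and \emph{continuum} set up in Section~2. The hypotheses of Corollary~\ref{dendroid} already include all the hypotheses of Theorem~\ref{t2roid} (KC, generalized hereditarily unicoherent, locally connected by continua), so the first step is simply to invoke that theorem to conclude $X$ is Hausdorff.

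Once the $T_2$ property is in hand, the conjunction with the stated assumptions that $X$ is compact and connected means $X$ is a continuum in the sense of the definitions section. It then remains to promote \emph{generalized} hereditary unicoherence to ordinary hereditary unicoherence, which is the defining property of a dendroid. For this, I would take two arbitrary subcontinua $C, D \subset X$ and observe that each is compact; since $X$ is KC, compactness forces $C$ and $D$ to be closed in $X$. Hence $C$ and $D$ qualify as closed compact connected subspaces of $X$, so the generalized hereditary unicoherence hypothesis applies directly and gives that $C \cap D$ is connected.

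Assembling the pieces, $X$ is a hereditarily unicoherent continuum, which is precisely the definition of a dendroid. There is essentially no obstacle here: Theorem~\ref{t2roid} does all the substantive work, and the only subtlety is the mild bookkeeping point that KC turns ``compact connected'' into ``closed compact connected,'' which is what lets one apply the ``generalized'' version of hereditary unicoherence to the pair of subcontinua that shows up in the definition of ``hereditarily unicoherent.''
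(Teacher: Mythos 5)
Your proposal is correct and matches the paper's intent: the corollary is stated as an immediate consequence of Theorem~\ref{t2roid}, with exactly the reading you give (Hausdorff from the theorem, hence $X$ is a continuum, and subcontinua are compact hence closed, so generalized hereditary unicoherence yields hereditary unicoherence). The bookkeeping step you flag is the only content beyond the theorem, and you handle it correctly.
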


\section{Examples of connected,compact, non-Hausdorff KC-spaces}

The examples in the following subsections are compact, connected,
non-Hausdorff KC-spaces, and each contains a nested sequence of compact
connected subspaces $...A_{3}\subset A_{2}\subset A_{1}$ such that $\cap
_{n=1}^{\infty }A_{n}$ is not connected$.$

To construct such examples we begin with a 1st countable, Hausdorff space $X,
$ such that $X$ fails to be locally compact, and then manufacture the
Alexandroff compactification $X\cup \{b\}.$ Since $X$ is both a KC-space and
a $k$ space, Theorem 5 of \cite{Wil} ensures that $Y$ is a connected,
non-Hausdorff, compact KC-space. See Lemma \ref{alex} for an alternate
argument.

The examples also serve to illustrate how slight weakening of the hypotheses
in Theorems \ref{t2rite} and \ref{t2roid} can destroy the guarantee of the $%
T_{2}$ condition.

\subsection{The Alexandroff compactification of the rationals\label{alexq}}

To reinforce the relevance of the remaining examples in this paper, we begin
with a discussion of a well studied space which fails to enjoy most of the
properties of interest elsewhere in the paper at hand. Let $Y=Q\cup \{\infty
\}$ denote the Alexandroff compactification of the rational numbers $Q$ (see
also \cite{Steen})$.$ Lemma \ref{alex} ensures $Y$ is a compact
non-Hausdorff KC-space. The space $Y$ is not locally connected, not
connected by continua, not generalized hereditarily unicoherent, and $\dim
(Y)=1,$ argued as follows.

Since $Q$ is not connected Lemma \ref{alex} does not guarantee that $Y$ is
connected. To see why $Y$ is connected, observe if $A\subset Q$ and $A$ is
compact, then $\overline{Q\backslash A}=Y$ and in particular $Y$ cannot be
the disjoint union of two nonempty compact subspaces.

For similar reasons, if $U\subset Q$ and $U$ is open, then $\overline{U}$ is
connected in $Y,$ since if $A\subset U$ and $A$ is compact, then $\overline{%
U\backslash A}=\overline{U}.$ Thus if $Q_{+}$ and $Q_{-}$ denote the
positive and negative rationals then $\overline{Q_{+}}\cap \overline{Q_{-}}%
=\{0,\infty \}$ and thus $Y$ is not generalized hereditarily unicoherent.

Every nontrivial continuum is uncountable and hence has constant image in $%
Y. $ Thus $Y$ is not connected by continua.

To see that $\dim (Y)=1,$ given an open covering $\mathcal{G}$ of $Y$ let $%
\infty \in V$ and note $C=Y\backslash V$ is a compact zero dimensional set
of real numbers. Thus there exists a covering of $C$ by pairwise disjoint
open sets $\{U_{1},.,U_{n}\}$ subordinate to $\mathcal{G}\backslash V.$ Each
point of $Y$ belongs to at most 2 sets in $\{V,U_{1},..,U_{n}\}$ and hence $%
\dim (Y)\leq 1$ (and $\dim (Y)>0$ since $Y$ is not $T_{2}$).

Notice if $A_{n}=\overline{(0,\frac{1}{n})\cap Q}$ then $A_{n}$ is compact
and connected but $\cap _{n=1}^{\infty }A_{n}=\{0,\infty \}$, and the latter
set is not connected.

\subsection{1-dimensional and n-connected for all n\label{1n}}

In the previous example $Q\cup \{\infty \}$\bigskip\ is not path connected.
For the current example $Z$ is 1-dimensional, compact, non-Hausdorff,
connected and $n$-connected for all $n$. Corollary \ref{ritecor} demands
that $Z$ is not locally path connected and in this example $Z$ is not
locally connected.

Consider the following planar set $T\subset R^{2}$ such that $T=((-\infty
,\infty )\times \{0\})\cup (\{0\}\times \lbrack 0,\infty )).$ Note $T$ is a
metrizable space and in particular $T$ is Hausdorff and 1st countable. Let $%
\mathcal{A}$ denote the following collection of open subsets of $T.$ For
each positive integer $n$ consider the open subspace $A_{n}=((n,\infty
)\times \{0\})\cup (\{0\}\times \cup _{k=n}^{\infty }(2k,2k+1)).$

Let $\mathcal{A=}\{A_{1},A_{2},...\}$ and note $\mathcal{A}$ is countable, $%
A_{n+m}\cap A_{n}=A_{n+m}$ and if $x\in T$ there exist open sets $U$ and $%
A_{n}$ such that $x\in U$ and $A_{n}\cap U=\emptyset .$ Thus if $\mathcal{T}%
_{T}$ denotes the open sets of $T$ and if $\mathcal{S}_{a}$ denotes the sets
of the form $\{a\}\cup A$ for $A\in \mathcal{A}$, and if $Y=X\cup \{a\}$ is
the space with topology generated by $\mathcal{T}_{T}\cup \mathcal{S}_{a},$
then Lemma \ref{w1} ensures that $Y$ is a connected 1st countable $T_{2}$
space.

To check that $Y$ is not locally compact at $a,$ consider the closure of a
basic open set $C=\overline{\{a\}\cup A_{n}}$ and note $\{2n,2n+2,...\}%
\subset C$ and this sequence has no subsequential limit in $Y.$

Thus if $Z=Y\cup \{b\}$ denotes the Alexandroff compactification of $Y,$
then Lemma \ref{alex} ensures $Z$ is a compact, connected, non-Hausdorff
KC-space.

Note $T$ is path connected. Let $[-\infty ,\infty ]$ denote the two point
compactification of $(-\infty ,\infty ).$ For $N\geq 0$ define $%
j_{N}:([-\infty ,\infty ]\times \{0\})\cup (\{0\}\times \lbrack
0,N])\rightarrow Z$ via $j(-\infty ,0)=b,$ $j(\infty ,0)=a$ and $j(x)=x$
otherwise. \ By construction $j_{N}$ is continuous and one to one, hence by
Lemma \ref{kcez} $j_{N}$ is an embedding. Hence $im(j_{N})$ is contractible.
Note $\{a,b\}\subset $ $im(j_{N})$ and $(0,0)\in T\cap im(j_{N}).$ Thus $Z$
is path connected.

Observe if $\alpha :[0,1]\rightarrow Z$ is a map such that $\alpha (0)\neq a$
and $\alpha (1)=a$ then there exists $N$ such that $im(\alpha )\subset
\{a,b\}\cup ((-\infty ,\infty )\times \{0\})\cup (\{0\}\times \lbrack 0,N]).$
To see that $Z$ is $n-$connected, suppose $f:S^{n}\rightarrow Z$ is a map.
Since $S^{n}$ is a Peano continuum obtain a surjective map $\beta
:[0,1]\rightarrow S^{n}.$ Let $\alpha =f\beta $ and note $im(f)=im(\alpha ).$
Thus $im(f)$ is contained in the contractible subspace $\{a,b\}\cup
((-\infty ,\infty )\times \{0\})\cup (\{0\}\times \lbrack 0,N]).$ Hence $f$
is inessential.

To check that $\dim (Y)=1$, take an open covering $\mathcal{G}$ of $Y$ with $%
a\in U$ and $b\in V.$ Replace $U$ and $V$ by basic open sets $U_{1}\subset U$
and $V_{1}\subset V$ such that $Y\backslash (U_{1}\cup V_{1})=([-N,N]\times
\{0\})\cup (\{0\}\times \lbrack 0,N])$ and such that $\{(N,0)\cup
(0,N)\}\subset U_{1}\backslash V_{1}$ and $(-N,0)\in V_{1}\backslash U_{1}$
and manufacture a covering $\mathcal{G}_{1}$ of $Y$ suboordinate to $%
\mathcal{G}$ such that $\{U_{1},V_{1}\}\subset \mathcal{G}_{1}$ and each
element of $Y$ is contained in at most two elements of the covering.

The local basis $\mathcal{S}_{a}$ shows $Z$ is not locally connected at $a$
(in fact $Z$ is not locally connected at $b$ either).

Let $A_{n}=\{a,b\}\cup (\{0\}\times \lbrack n,\infty ))$ and note $A_{n}$ is
compact and connected but $\cap _{n=1}^{\infty }A_{n}=\{a,b\}$.

\subsection{1-dimensional and generalized hereditarily unicoherent\label%
{1uni}}

Neither of the previous examples are generalized hereditarily unicoherent.
Theorem \ref{t2roid} ensures if $D$ is a 1-dimensional, non-Hausdorff,
generalized hereditarily unicoherent space, then $D$ is not locally
connected by continua and the example at hand is not locally connected.

Recall the previous example and the discussion of the subspace $D\subset Z$
such that $D=\{a,b\}\cup (\{0\}\times \lbrack 0,\infty )).$ Thus $D$ is a
connected, 1-dimensional, compact non-Hausdorff KC-space, $D$ is generalized
hereditarily unicoherent, but $D$ is not path connected or locally connected.

Lemma \ref{kcez} ensures $Z$ cannot be connected by continua (since
otherwise $Z$ would be $T_{2}$)

Let $A_{n}=\{a,b\}\cup (\{0\}\times \lbrack n,\infty ))$ and note $A_{n}$ is
compact and connected but $\cap _{n=1}^{\infty }A_{n}=\{a,b\}$.

\subsection{1-dimensional and locally contractible\label{1cont}}

To construct a 1-dimensional, non-Hausdorff $KC$ compactum $Y$ such that $Y$
is locally contractible, we glue together countably many closed rays at the
common minimal point, and then apply Alexandroff compactification to obtain $%
Y.$ Corollary \ref{ritecor} demands that $Y$ cannot be simply connected.

Let $\{e_{1},e_{2},..\}$ denote the standard unit vectors $%
e_{n}=(0,...0,1,0,0,...)$ in familiar $l_{2}$ Hilbert space (the space of
square summable sequences), and let $X$ denote the subspace of $l_{2}$
consisting of points of the form $\alpha e_{n}$ with $\alpha \in \lbrack
0,\infty ).$

Notice $X$ is a connected 1-dimensional metric space and $X$ fails to be
locally compact at precisely the point $(0,0,0,...).$ Thus by Lemma \ref%
{alex} if $Y=X\cup \{\infty \}$ denotes the Alexandroff compactification of $%
X$ then $Y$ is a non-Hausdorff, compact, connected KC-space.

Note $X$ is locally contractible. To check local contractibility at $\infty
, $ we manufacture a homotopy of $Y\backslash (0,0,0,..),$ shrinking basic
open neighborhoods $U$ of $\infty $ within themselves to $\infty ,$ as
follows.

Define $H:Y\backslash (0,0,0,...)\times \lbrack 0,\infty ]\rightarrow
Y\backslash (0,0,0,...)$ so that $H(\alpha e_{n},t)=(\alpha +t)e_{n}$ if $%
t<\infty $ and $H(x,t)=\infty $ otherwise.

To check that $H$ is continuous suppose $U$ is a subbasic open set in $%
Y\backslash (0,0,0,..).$

If $\infty \notin U$ and $U=(0e_{n},\alpha e_{n})$ then $H^{-1}(U)=(0e_{n},%
\alpha e_{n})$ which is open.

If $\infty \notin U$ and $U=(\alpha e_{n},\infty e_{n})$ then $%
H^{-1}(U)=(0e_{n},\infty e_{n})$ which is open.

If $\infty \in U$ we can assume $Y\backslash U$ is connected and $%
(0,0,0,...)\notin U$ and note $H^{-1}(U)=U$ which is open. Thus $H$ is
continuous.

To see informally why $Y$ is 1-dimensional, given an open covering $\mathcal{%
G}$ of $Y,$ observe there exist respective suboordinate basis elements $U$
and $V$ of the special points $\{(0,0,0,...),\infty \}$ such that $%
Y\backslash (U\cup V)$ is the countable union of disjoint closed line
segments $\beta _{1},\beta _{2},...$ and such that the respective endpoints
of $\partial \beta _{n}$ are respective limit points of $U$ and $V.$ All the
segments $\beta _{n}$ can be simultaneously lengthened slightly to pairwise
disjoint open arcs $\gamma _{1},\gamma _{2},..$ such that the ends of $%
\gamma _{n}$ are contained respectively in $U$ and $V.$ In particular
extending the standard construction that $\dim (\beta _{1}\cup \beta
_{2}..)=1$ relative to the covering $\mathcal{G}$ yields the desired
covering $\mathcal{G}_{1}.$

Let $J_{n}=[0e_{n},\infty e_{n})\cup \{\infty \}$ denote the loop containing 
$e_{n}$ with endpoints $(0,0,0,..)$ and $\infty $. Let $A_{N}=\cup
_{n=N}^{\infty }J_{n}$ and note $A_{N}$ is compact and connected but the two
point set $\cap _{N=1}^{\infty }A_{N}=\{(0,0,0,...),\infty \}$ is not
connected.

\subsection{1-dimensional, lcc, and simply connected\label{1lcc}}

In similar manner to the example from subsection \ref{1cont}, to construct a
1-dimensional, simply connected, non-Hausdorff $KC$ compactum $Y$ such that $%
Y$ is locally connected by continua we glue together countably many `long
lines' at the common minimal point, and then apply Alexandroff
compactification to obtain $Y.$

Let $L$ denote the (noncompact) long line (i.e. $L$ is obtained by attaching
open intervals between consecutive points of the minimal uncountable well
ordered set $S_{\Omega },$ to obtain a connected 1-dimensional nonseparable
space $L$ such that each point of $L$ has a neighborhood homeomorphic to $%
(0,1)$).

Let $z$ denote the minimal point of $L$ and let $X$ denote the quotient
space of $\{1,2,3,...\}\times L$ obtained by identifying $(m,z)$ and $(n,z).$
Thus we are gluing countably many copies of $L$ together at the minimal
point. Then $X$ is a 1st countable $T_{2}$ space which fails to be locally
compact. Thus if $Y$ is the Alexandroff compactification of $X$ then Lemma %
\ref{alex} ensures $Y$ is a connected, compact, non-Hausdorff KC-space.

To see why $\dim (Y)=1,$ we can apply essentially the same argument as in
subsection \ref{1cont}.

To see that $Y$ is simply connected note by construction $Y$ is 1
dimensional and contains no simple closed curves and hence by Lemma \ref{scc}
$Y$ is simply connected.

Let $A_{N}$ denote the union of the closed arcs $L_{N},L_{N+1},..$ and note $%
A_{N}$ is compact and connected but $\cap _{N=1}^{\infty }A_{N}$ is not
connected.

\subsection{2-dimensional, locally contractible, and n-connected for all n 
\label{2d}}

Theorem \ref{t2rite} forbids the existence of a 1-dimensional,
non-Hausdorff, locally contractible, $n$-connected, KC-space. Our
construction of such a space of dimension 2 is equivalent to taking a closed
disk, deleting a half open interval from the boundary, and then taking the
Alexandroff compactification of the remaining space. For convenient
coordinates, we begin with the closed first quadrant $X\subset R^{2}$ and
first attach a point $a$ to create a $T_{2}$ space which is not locally
compact at $a.$ We then take the Alexandroff compactification of $X\cup \{a\}
$ $\ $to obtain the desired space $Y=X\cup \{a\}\cup \{b\}.$The idea behind
the example is similar to the totally disconnected space constructed in
example 99 \cite{Steen}.

Intuition suggests $Y$ is \textbf{not} contractible but we do not settle
this question. However if $Y$ fails to be contractible, then $Y$ would serve
to highlight a potential difference between KC-spaces and the familiar
theory of absolute retracts, since every finite dimensional, $n$-connected,
compact, locally contractible metric space is necessarily contractible
(Theorem 4.2.33 \cite{vanmill}).

Let $X=[0,\infty )\times \lbrack 0,\infty )$ with the standard topology
(i.e. $X$ is the closed 1st quadrant in the Euclidean plane). We will attach
two points $a$ and $b$ to $X.$

Let $\mathcal{A}$ denote the collection of open sets of $X$ of the form $%
[0,\infty )\times (n,\infty )$ for $n\in \{1,2,3,...).$ Note $\mathcal{A}$
is countable, closed under finite intersections, and given $(x,y)\in X$
there exists an open set $U\subset X$ and $A\in \mathcal{A}$ such that $%
U\cap A=\emptyset .$ Let $a\notin X.$ If $\mathcal{S}_{a}$ denotes the
collection of sets of the form $\{a\}\cup A$ for $A\in \mathcal{S}_{a},$
Lemma \ref{w1} ensures that $X\cup \{a\}$ is a $T_{2}$ 1st countable space.
Observe $\{a\}\cup ([0,\infty )\times (n,\infty ))$ fails to have compact
closure in $X\cup \{a\}$ since the sequence $((0,n+1),(1,n+1),...)$ has no
subsequential limit in $X\cup \{a\}.$ Thus $X\cup \{a\}$ is not locally
compact at $a$. Let $b\notin X\cup \{a\}$ and let $Y=X\cup \{a\}\cup \{b\}$
denote the Alexandroff compactification of $X\cup \{a\}.$ Lemma \ref{alex}
ensures $Y$ is a compact, connected, non-Hausdorff KC-space.

To obtain a particular local basis $\mathcal{S}_{b}$ at $b,$ let $\mathcal{M}
$ denote the collection of nondecreasing maps $f:[0,\infty )\rightarrow
\lbrack 1,\infty ).$ For each $f\in \mathcal{M}$ let $U_{f}=\{(x,y)|f(y)<x%
\}. $ Note $U_{f}$ is open in $X.$ Let $\mathcal{B}$ denote the collection
of sets of the form $U_{f}$ and let $\mathcal{S}_{b}$ denote the collection
of sets of the form $\{b\}\cup U_{f}$ for $U_{f}\in \mathcal{B}$. To check
that $\mathcal{S}_{b}$ is a collection of open sets in $Y,$ let $\{b\}\cup
U_{f}\in \mathcal{S}_{b}$ and consider the complement $C=Y\backslash
(\{b\}\cup U_{f}).$ To check that $C$ is compact, given a covering $\mathcal{%
G}$ of $C$ by basic open sets in $Y$, obtain $U\in \mathcal{G}$ such that $%
a\in U.$ Notice $C\backslash U$ is compact in $X,$ since $C\backslash U$ is
a topological disk whose simple closed curve boundary is the concatenation
of 3 arcs: a line segment $\alpha \subset \partial U,$ an arc $\beta $
contained in $\partial U_{f},$ and a third arc $\gamma $ contained in the
union of the $x$ and $y$ axes in $X.$ Thus we can cover $C\backslash U$ by a
finite collection of the open sets in $\mathcal{G}\backslash \{U\}.$ Thus $%
\mathcal{S}_{b}$ is a collection of open sets in $Y$ each of which contains $%
b.$ To check that $\mathcal{S}_{b}$ is a local basis suppose we have a
compact set $C\subset Y\backslash \{b\}.$ Observe for each $n>0$ there
exists $m$ such that $([m,\infty )\times \lbrack 0,n])\cap C=\emptyset ,$ (
since otherwise there would exist a sequence $\{(x_{n},y_{n})\}\subset C$
such that $\{x_{n}\}$ is bounded and $y_{n}\rightarrow \infty $ and $%
\{(x_{n},y_{n})\}$ will be a noncompact closed subspace of the compact space 
$C).$ Hence we can manufacture $f\in \mathcal{M}$ and $U_{f}\in \mathcal{B}$
such that $C\subset Y\backslash U_{f}$ and, thus $\mathcal{S}_{b}$ is a
local basis at $b.$

To check that $\pi _{n}(Z)=0$ for all $n\geq 0$ it suffices to show if $Z$
is a compact $T_{2}$ space, then each map $f:Z\rightarrow Y$ is homotopic to
a constant, and the strategy is to show that both subspaces $X\cup \{a\}$
and $X\cup \{b\}$ are contractible and in particular $X\cup \{a\}$ admits
strong deformation retracts onto large compact spaces of $X\cup \{a\}.$
Normality of $Z$ combined with the standard pasting Lemma from general
topology will allow us to push $f$ into $X\cup \{b\},$ and then we will
homotop $f$ to the constant map $b,$ by contracting $X\cup \{b\}$ to the
point $b.$

To see that $X\cup \{a\}$ is homeomorphic to a closed topological disk with
a half open interval deleted from the boundary, let $h:[0,\infty
)\rightarrow \lbrack 0,1)$ be any homeomorphism and thus we can consider $X$
as the space $[0,1)\times \lbrack 0,1)$. Now consider the following
operations on the familiar closed unit square $[0,1]\times \lbrack 0,1].$
Take the quotient space by identifying $[0,1]\times \{1\}$ to a point and
note the quotient space $X_{1}$ is still a closed topological disk. Now
delete from $X_{1}$ the side $\{1\}\times \lbrack 0,1)$ to obtain the space $%
X_{2}.$ Then $X_{2}$ is homeomorphic to $X\cup \{a\}$ ( and $a$ corresponds
to the top side $[0,1]\times \{1\}$ of $X_{2}$).

Now suppose $Z$ is any compact $T_{2}$ space and $f:Z\rightarrow Y$ is any
map. Our first task is to homotop $f$ to a map $g$ such that $im(g)\subset
X\cup \{b\}.$

If $im(f)\subset X\cup \{b\}$ let $g=f.$ Otherwise let $A=f^{-1}(\{a\})$ and 
$B=f^{-1}(\{b\})$ and apply normality of $Z$ to obtain an open set $U\subset
Z$ such that $A\subset U$ and $\overline{U}\cap B=\emptyset .$ Let $\partial
U=\overline{U}\backslash U$ and note $f(\partial U)\subset X.$ Obtain a
compact topological disk $D\subset X$ such that $f(\partial U)\subset D$ and
obtain a strong deformation retract $H_{t}:X\cup a\rightarrow X\cup a$ onto $%
D.$ Let $J_{t}:X\rightarrow X\cup b$ denote the constant homotopy. Observe $(%
\overline{U}\times \lbrack 0,1])\cap ((Z\backslash U)\times \lbrack
0,1])\subset \partial U\times \lbrack 0,1].$ Now apply the pasting lemma 
\cite{Munkres} , gluing together the union of the restricted homotopies $%
H_{t}(f_{\overline{U}})\cup J_{t}(f_{Z\backslash U})$, and obtain a homotopy
of $f$ to a map $g=H_{1}(f_{\overline{U}})\cup f_{Z\backslash U}$ such that $%
g(Z)\subset X\cup \{b\}.$

Thus we have shown that $Y\backslash \{b\}$ is locally contractible, and
(for $n\geq 0$) any map $S^{n}:Z\rightarrow Y$ is homotopic in $Y$ to a map $%
g:S^{n}\rightarrow Y\backslash \{a\}.$ To complete the proof that $Y$ is $n$
connected and locally contractible it suffices to show $Y\backslash \{a\}$
is contractible and locally contractible. To accomplish this we will
manufacture a global contraction from $X\cup \{b\}$ to the $b$ whose
restrictions shrink basic open sets $\{b\}\cup U_{f}$ within themselves to $%
b.$

\bigskip Let $[0,\infty ]$ denote the one point compactification of $%
[0,\infty )$ and define a function $H:(X\cup \{b\})\times \lbrack 0,\infty
]\rightarrow X\cup \{b\}$ so that $H(x,y,t)=(x+t,y)$ if $(x,y)\in X$ and $%
t<\infty ,$ $H(b,t)=b$ for all $t\in \lbrack 0,\infty ],$ and $H(x,y,\infty
)=b.$ To check that $H$ is continuous it suffices to check that the preimage
under $H$ of subbasic sets is open\bigskip . Suppose $U\subset X\cup \{b\}$
is a subbasic open set. Let $J$ be open in $[0,\infty ).$ If $b\notin U$
note if $U=[0,x)\times J$ then $H^{-1}(U)=[0,x)\times J$ is open, and if $%
U=(x,\infty ]\times J$ then $H^{-1}(U)=[0,\infty )\times J$ is open. If $%
b\in U$ then let $U=\{b\}\cup B$ for $B\in \mathcal{B}$ and $B=U_{f}$ with $%
f\in \mathcal{M}$, and observe $H^{-1}(U)=U\times \lbrack 0,\infty ].$

To see informally why $\dim (Y)=2$ suppose $\mathcal{G}$ is an open covering
of $Y.$ Obtain subordinate basis elements $\{a\}\cup U$ and $\{b\}\cup U_{f}$
such that $U=[0,\infty )\times (n,\infty )$ and $U_{f}=\{(x,y)|f(y)<x\}$
with $f\in \mathcal{M}$. \ Let $D=Y\backslash (U\cup V).$ Notice $D$ is a
closed topological disk (as described in the earlier paragraph) and the
boundary point $z=(f(y),y)$ poses the `greatest risk' of belonging to too
many open sets in the cover $\mathcal{G}_{1}$ currently under construction,
since $(f(y),y)$ is a (unique) point of $D$ which is a limit point of both
sets $U$ and $V.$ So now cover $z$ by a tiny round open disk $V$ subordinate
to the original cover $\mathcal{G}$, and then proceed (as in a standard
proof that the topological disk $D\backslash U$ is 2 dimensional) to build
the desired cover $\mathcal{G}_{1}$ subordinate to $\mathcal{G}$.

Let $A_{N}$ denote the complement in $Y$ of $[0,n)\times \lbrack 0,n).$ Note 
$A_{N}$ is compact and connected but $\{a,b\}=\cap _{N=1}^{\infty }A_{N}.$

\subsection{2-dimensional, compact, non-Hausdorff, KC, contractible, and
locally contractible\label{2dnice}}

To create a 2-dimensional non-Hausdorff $KC$ compactum $Y$ such that $Y$ is
both contractible and locally contractible, apply directly Lemma \ref{ezc}
to the locally contractible example from subsection \ref{1cont}. \ Recall $%
J_{n}=[0e_{n},\infty e_{n})\cup \{\infty \}$ denotes the loop containing $%
e_{n}$ with endpoints $(0,0,0,..)$ and $\infty $. Let $A_{N}=(\cup
_{n=N}^{\infty }J_{n})\times \lbrack 0,1]$ and note $A_{N}$ is compact and
connected but the two point set $\cap _{N=1}^{\infty }A_{N}$ is not
connected.

\section{Supplemental Lemmas}

Justification of the theorems and examples throughout this paper relies on
various well known or elementary results concerning KC-spaces or basic
general topology. For convenience we include proofs.

The nested intersection of subcontinua indexed by an arbitrary ordered set $%
I $ is connected, and this yields a proof (in conjunction with Zorn's Lemma)
that any two points of a hereditarily unicoherent continuum are contained
within a canonical subcontinuum (Lemma \ref{uni}).

Familiar facts about locally path connected spaces (such spaces have open
components and such spaces are connected if they are path connected), have
counterparts with the notion of `path connected' replaced by `connected by a
continua', and this is the content of Lemmas \ref{q},\ref{lcc} and \ref{open}%
.

Lemmas \ref{w1} and \ref{alex} validate the basic properties of the main
examples of the paper at hand, and are essentially a special case of Theorem
5 \cite{Wil}.

Lemma \ref{ezc} yields a method to construct a contractible KC-space with
straightforward justification.

\begin{lemma}
\label{uni}Suppose the continuum $X$ is a hereditarily unicoherent continuum
and $\{a,b\}\subset X.$ Let $Y$ denote the intersection of all subcontinua
containing $\{a,b\}.$ Then $Y$ is connected.
\end{lemma}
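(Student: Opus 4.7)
The plan is to apply Zorn's lemma to the family of subcontinua containing $\{a,b\}$, ordered by reverse inclusion, and then use hereditary unicoherence to show the minimal element obtained coincides with the full intersection $Y$.

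First, let $\mathcal{F}$ denote the collection of all subcontinua of $X$ containing $\{a,b\}$, partially ordered by reverse inclusion. Since $X \in \mathcal{F}$, the family is nonempty. To invoke Zorn, I need to exhibit an upper bound (in reverse inclusion, i.e., a lower bound under set inclusion) for each chain. Given a chain $\{C_\lambda\}_{\lambda \in I} \subset \mathcal{F}$, its intersection is nonempty (it contains $\{a,b\}$), closed in the compact Hausdorff space $X$ hence compact, and connected by the standard fact that a nested (linearly ordered) intersection of subcontinua of a Hausdorff continuum is a continuum. So the intersection lies in $\mathcal{F}$ and serves as an upper bound in the reverse-inclusion order. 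Zorn's lemma then supplies a minimal element $M \in \mathcal{F}$.

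Next I would use hereditary unicoherence to upgrade $M$ from minimal to minimum. For any $C \in \mathcal{F}$, the intersection $M \cap C$ is a subcontinuum of $X$ (connected by hereditary unicoherence, compact as a closed subset of the compact $X$, and nonempty because it contains $\{a,b\}$), so $M \cap C \in \mathcal{F}$. Since $M \cap C \subset M$ and $M$ is minimal, we have $M \cap C = M$, that is, $M \subset C$. Thus $M \subset \bigcap_{C \in \mathcal{F}} C = Y$. The reverse inclusion $Y \subset M$ is automatic from $M \in \mathcal{F}$, so $Y = M$ is a continuum and in particular connected.

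The only place requiring real content is the assertion that a nested intersection of subcontinua is connected; this is cited by the author as a standing fact (the preceding paragraph of the excerpt flags exactly this), and the usual argument is a quick contradiction: a separation of the intersection lifts, via normality of $X$ and compactness, to disjoint open sets $U,V$ whose union contains some $C_\lambda$ in the chain, contradicting connectedness of that $C_\lambda$. Everything else is bookkeeping with hereditary unicoherence and Zorn.
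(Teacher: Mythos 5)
Your proposal is correct and follows essentially the same route as the paper's proof: Zorn's Lemma applied to the subcontinua containing $\{a,b\}$ under reverse inclusion, with chains bounded by nested intersections, and hereditary unicoherence used to show the maximal (i.e.\ inclusion-minimal) element equals $Y$. The only difference is that you additionally sketch the standard argument for connectedness of a nested intersection, which the paper merely cites.
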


\begin{proof}
To see that $Y$ is connected apply Zorn's Lemma as follows. Let $S$ denote
the collection of subcontinua of $X$ that contain $\{a,b\},$ partially
ordered by reverse inclusion such that $A\leq B$ if $B\subset A$. Since $Y$
is a $T_{2}$ space, the nested intersection of any linearly ordered
collection of subcontinua is connected, and thus every chain in $S$ has an
upper bound. By Zorn's Lemma let $M$ be a maximal element in $S.$ To see
that $M=Y$ note if $A\in S$ then $M\cap A\in S$ (since $X$ is \ hereditarily
unicoherent) and moreover $A\cap M\subset M.$ Thus $M=M\cap A$ since $M$ is
maximal. Thus $M\subset Y.$ Conversely, since $M\in S$ it follows that $%
Y\subset M.$
\end{proof}

\begin{lemma}
\label{q}Suppose $X$ and $Y$ are disjoint continua and $a\in X$ and $b\in Y$
and $Z$ is the quotient of $X\cup Y$ identifying $a$ and $b.$ Then $Z$ is a
continuum. Suppose $f:X\rightarrow W$ and $g:Y\rightarrow W$ are maps such
that $f(a)=g(b).$ Then $f\cup g:Z\rightarrow W$ is a continuous function.
\end{lemma}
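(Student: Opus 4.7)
The plan is to verify in turn the three defining properties of a continuum for $Z$ (compact, connected, Hausdorff) and then deduce continuity of $f \cup g$ from the universal property of the quotient topology. Let $q:X\sqcup Y\to Z$ denote the quotient map. Compactness is immediate since $q$ is a continuous surjection from the compact space $X\sqcup Y$. Connectedness is also immediate: $q(X)$ and $q(Y)$ are connected, and they share the point $q(a)=q(b)$, so their union $Z$ is connected.

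The main work is the Hausdorff property, which I would establish by a two-case argument on whether the glued point $p_0:=q(a)=q(b)$ is involved. Given distinct $[u],[v]\in Z$, if neither equals $p_0$ then $u,v$ have unique preimages in $X\sqcup Y\setminus\{a,b\}$, and since $X\sqcup Y$ is compact Hausdorff (hence regular), I can choose disjoint open neighborhoods of $u$ and $v$ which avoid the closed set $\{a,b\}$; these are saturated and descend to disjoint open neighborhoods in $Z$. For the remaining case, say $[u]=p_0$ and $[v]\neq p_0$ with representative $v\in X\setminus\{a\}$ (the case $v\in Y$ is symmetric), use $T_2$ in $X$ to separate $a$ from $v$ by disjoint open sets $U_a, U_v\subset X$ with $a\notin U_v$; then $U_a\cup Y$ and $U_v$ are disjoint saturated open subsets of $X\sqcup Y$, whose images under $q$ separate $p_0$ from $[v]$ in $Z$.

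Finally, continuity of $f\cup g$ follows from the universal property of the quotient. The maps $f$ and $g$ assemble into a continuous map $f\sqcup g:X\sqcup Y\to W$ by the universal property of disjoint unions, and the hypothesis $f(a)=g(b)$ ensures this map is constant on each equivalence class of the identification, hence factors as $(f\cup g)\circ q$. Since $f\sqcup g$ is continuous and $q$ is a quotient map, $f\cup g$ is continuous.

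The only step requiring any care is the Hausdorff verification, and specifically the case where one of the points is the glued point $p_0$; the trick there is recognizing that taking all of $Y$ as part of the neighborhood of $p_0$ automatically produces a saturated open set disjoint from any neighborhood lying in $X\setminus\{a\}$, which is what makes the two resulting open sets in $Z$ disjoint.
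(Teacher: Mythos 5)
Your proof is correct and follows essentially the same route as the paper: compactness and connectedness from the quotient surjection, a two-case Hausdorff argument in which the glued point receives the saturated neighborhood $U_a\cup Y$, and continuity of $f\cup g$ via the universal property of the quotient. If anything, you are slightly more careful than the paper in the case where neither point is the glued one, since you explicitly shrink the neighborhoods to avoid $\{a,b\}$ so that they are saturated and descend to disjoint open sets in $Z$.
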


\begin{proof}
Let $q:X\cup Y\rightarrow Z$ denote the quotient map. Since by definition $q$
is onto and continuous, $Z$ is compact and connected. To check $Z$ is $T_{2}$
suppose $\{c,d\}\subset Z.$ If $c\neq \{a,b\}$ and $d\neq \{a,b\}$ then
apply directly the $T_{2}$ property of $X\cup Y$ to obtain disjoint open
sets separating $c$ and $d.$

Suppose $x\in X\backslash \{a\}$. Apply the $T_{2}$ property of $X$ to
obtain open sets $U$ and $V_{a}$ in $X$ such that $U\cap V_{a}=\emptyset $
and $x\in U$ and $a\in V_{x}.$ Then $q^{-1}(U)$ and $q^{-1}(V_{a}\cup Y)$
separate $x$ and $\{a,b\}.$

By a symmetric argument if $y\in Y\backslash \{a,b\}$ then $y$ and $\{a,b\}$
can be separated in $Z.$

Notice $f\cup g$ is well defined. Recall $q$ is a quotient map and hence $%
f\cup g$ is continuous since if $z\in Z$ and $B=q^{-1}(z)$ then $(f\cup
g)_{|B}$ is constant.
\end{proof}

\begin{lemma}
\label{lcc}Suppose the nonempty connected space $X$ is locally connected by
continua. Then $X$ is connected by continua.
\end{lemma}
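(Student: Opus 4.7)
The plan is to mimic the classical proof that a connected, locally path connected space is path connected, with continua in place of paths. Fix a basepoint $x_{0}\in X$ and let
\[
A=\{y\in X:\exists \text{ a continuum }Z\text{ and a map }f:Z\rightarrow X\text{ with }\{x_{0},y\}\subset \mathrm{im}(f)\}.
\]
The strategy is to show that $A$ is nonempty, open, and closed, so that $A=X$ by connectedness of $X$; combining two such continua via the basepoint will then witness that any pair of points lies in the image of a single continuum.

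Nonemptiness is immediate: a singleton is a compact connected $T_{2}$ space, so the constant map at $x_{0}$ puts $x_{0}\in A$. For openness, fix $y\in A$ with witnessing map $f:Z\rightarrow X$, and use the $lcc$ hypothesis to pick an open $U\ni y$ which is itself connected by continua. For any $z\in U$ there is a continuum $Z'$ and a map $g:Z'\rightarrow X$ whose image (inside $U$) contains $\{y,z\}$. Choose $a\in Z$ and $b\in Z'$ with $f(a)=g(b)=y$ and apply Lemma \ref{q}: the quotient $W=(Z\sqcup Z')/(a\sim b)$ is again a continuum, and the maps $f,g$ assemble into a continuous map $W\rightarrow X$ whose image contains $\{x_{0},z\}$. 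Hence $U\subset A$.

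For closedness I would instead show $X\setminus A$ is open by essentially the same construction. Given $y\notin A$, choose an open $U\ni y$ which is connected by continua. If some $z\in U\cap A$ existed, then concatenating (via Lemma \ref{q}) a continuum in $X$ connecting $x_{0}$ to $z$ with a continuum in $U$ connecting $z$ to $y$ would exhibit $y\in A$, a contradiction. Thus $U\subset X\setminus A$. Since $X$ is connected and $A$ is a nonempty clopen subset, $A=X$. Finally, for arbitrary $\{x,y\}\subset X$, one more application of Lemma \ref{q} glues a continuum through $x_{0}$ from $x$ to $y$, establishing that $X$ is connected by continua.

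The only step with any subtlety is verifying that the gluing of two continua along a pair of points produces a continuum together with a continuous map into $X$ extending both given maps, and this is precisely the content of Lemma \ref{q}; so once that lemma is in hand the argument is a direct transcription of the standard openness/closedness proof.
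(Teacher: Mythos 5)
Your proposal is correct and follows essentially the same route as the paper: define the set $A$ of points reachable from a basepoint by images of continua, show it is nonempty, open, and closed using the gluing construction of Lemma \ref{q}, and conclude by connectedness. The only cosmetic differences are that you prove $X\setminus A$ is open rather than showing directly that a limit point of $A$ lies in $A$, and that you fix a single basepoint (necessitating one extra gluing at the end) where the paper lets the first point of the pair be arbitrary; both are immaterial.
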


\begin{proof}
Fix $x\in X$ and let $A$ denote the set of $y$ in $X$ such that there exists
a continuum $Z_{y}$ and a map $f:Z_{y}\rightarrow X$ such that $%
\{x,y\}\subset im(f).$

To see that $A$ is open, given $y\in A$ obtain a continuum $Z_{y}$ and a map 
$f:Z_{y}\rightarrow X$ such that $\{x,y\}\subset im(f).$Apply the $lcc$
property of $X$ at $y$ and obtain an open set $U$ such $y\in U$ and $U$ is
connected by continua. Given $z$ in $U$ obtain a continuum $Z_{z}$ and a map 
$g:Z_{z}\rightarrow U$ such that $\{y,z\}\subset im(g).$ Obtain $a\in Z_{y}$
and $b\in Z_{z}$ such that $f(a)=y=g(b).$ Let $Z$ denote space obtained from 
$Z_{y}\cup Z_{z}$ by identifying $a$ and $b$. Apply Lemma \ref{q} to
conclude $Z$ is a continuum and the map $f\cup g:Z\rightarrow X$ satisfies $%
\{x,z\}\subset im(f\cup g).$ Thus $A$ is open.

To see that $A$ is closed suppose $z$ is a limit point of $A.$ Obtain an
open set $U_{z}$ such that $z\in U_{z}$ and $U_{z}$ is connected by
continua. Obtain $y\in A\cap U_{z}.$ Obtain disjoint continua $Z_{y}$ and $%
Z_{z}$ and maps $f:Z_{y}\rightarrow X$ and $g:Z_{z}\rightarrow X$ such that $%
\{x,y\}\subset im(f)$ and $\{y,z\}\subset im(g).$ Obtain $a\in Z_{y}$ and $%
b\in Z_{z}$ such that $f(a)=y=g(b).$ Let $Z$ denote space obtained from $%
Z_{y}\cup Z_{z}$ by identifying $a$ and $b$. Apply Lemma \ref{q} to conclude 
$Z$ is a continuum and the map $f\cup g:Z\rightarrow X$ satisfies $%
\{x,z\}\subset im(f\cup g).$ Thus $A$ is closed.

Since $X$ is connected and $A$ is both open and closed, $A=X.$ Thus $X$ is
connected by continua.
\end{proof}

\begin{lemma}
\label{open}Suppose the space $X$ is lcc and $Y$ is a component of $X.$ Then 
$Y$ is an open subspace of $X$ and $Y$ is connected by continua.
\end{lemma}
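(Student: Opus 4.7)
The plan is to prove the two conclusions in order: first that $Y$ is open in $X$, and then that $Y$ is connected by continua. The key reduction for the second part is that openness of $Y$, combined with the lcc hypothesis on $X$, will force $Y$ itself to be lcc, at which point Lemma \ref{lcc} can be applied directly.

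For openness, I would fix an arbitrary $y \in Y$ and invoke the lcc property at $y$ to produce an open set $U$ with $y \in U$ such that $U$ is connected by continua. The main observation is that such a $U$ must lie entirely inside the component $Y$. Indeed, for any $z \in U$, the definition of ``connected by continua'' yields a continuum $Z$ and a map $f:Z\rightarrow U$ with $\{y,z\}\subset im(f)$. Since $Z$ is connected and $f$ is continuous, $im(f)$ is a connected subset of $X$ containing both $y$ and $z$, hence it lies inside the component $Y$. In particular $z \in Y$, so $U \subset Y$. This exhibits $Y$ as a neighborhood of each of its points.

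Once $Y$ is known to be open, I would note that $Y$ inherits the lcc property: given $y \in Y$, the same open set $U$ from the previous paragraph works, because we just showed $U \subset Y$, so $U$ is open in $Y$ as well, and $U$ is connected by continua by assumption. Now $Y$ is connected (it is a component) and lcc, so Lemma \ref{lcc} applies and yields that $Y$ is connected by continua.

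I do not anticipate a real obstacle in this argument; the only subtle point is recognizing that the continuum witnessing the lcc property at $y$ must have its image contained in the component $Y$, which is immediate once one remembers that continuous images of connected spaces are connected. The rest is essentially bookkeeping plus one appeal to the previous lemma.
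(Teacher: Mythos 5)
Your proposal is correct and follows essentially the same route as the paper: use the lcc property at $y\in Y$ to get an open, connected-by-continua $U$, show $U\subset Y$ via connectedness of continuum images, conclude $Y$ is open and lcc, and finish with Lemma \ref{lcc}. The only cosmetic difference is that the paper notes $U$ itself is connected (as a union of connected images sharing the point $y$) rather than arguing pointwise for each $z\in U$; both versions rest on the same fact that a connected set meeting a component lies in it.
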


\begin{proof}
Suppose $y\in Y.$ Since $X$ is lcc obtain an open set $U$ such that $y\in U$
and such that $U$ is connected by continua. Then $U$ is connected (since $U$
is the union of images of continua, each of which contains the common point $%
y$) and hence $U\subset Y.$ Thus $Y$ is open and the open set $U$ also shows 
$Y$ is lcc. Hence by Lemma \ref{lcc} $Y$ is connected by continua.
\end{proof}

\begin{lemma}
\label{w1}Suppose $(X,\mathcal{T}_{X})$ is a connected $T_{2}$ space and $%
\mathcal{A}\subset \mathcal{T}_{X}$ such that$\mathcal{\ A\neq \emptyset }$
,and such that if $\{A_{1},A_{2}\}\subset \mathcal{A}$ then $A_{1}\cap
A_{2}\in \mathcal{A}$, and suppose $a\notin X$ and $Y=X\cup \{a\}.$ Suppose $%
\mathcal{S}_{a}$ denotes the collection of sets of the form $\{a\}\cup 
\mathcal{A}$ for $A\in \mathcal{A}$. Then $\mathcal{T}_{X}\cup \mathcal{S}%
_{a}$ is a basis for a topology $(Y,\mathcal{T}_{Y})$ on $Y$, $(Y,\mathcal{T}%
_{Y})$ is connected if $\emptyset \notin \mathcal{A}$, and $(Y,\mathcal{T}%
_{Y})$ is $T_{2}$ if for each $x\in X$ there exists $U\in \mathcal{T}_{X}$
and $A\in \mathcal{A}$ such that $U\cap A=\emptyset .$ If $X$ is first
countable and $\mathcal{A}$ is countable then $Y$ is first countable.
\end{lemma}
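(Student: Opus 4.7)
The plan is to verify the four independent assertions in sequence.

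First, for the basis claim, I would check the standard covering and intersection axioms. Covering $Y$ is immediate: $\mathcal{T}_X$ already covers $X$, and since $\mathcal{A}\neq\emptyset$ any element of $\mathcal{S}_a$ contains $a$. For the intersection axiom there are three cases: two basis elements both from $\mathcal{T}_X$ (trivial), one from each family (in which case $U\cap(\{a\}\cup A) = U\cap A \in \mathcal{T}_X$), and two from $\mathcal{S}_a$ (whose intersection is $\{a\}\cup(A_1\cap A_2)\in\mathcal{S}_a$, where the assumed closure of $\mathcal{A}$ under finite intersections is crucial).

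For connectedness I would argue by contradiction. Suppose $Y = U\sqcup V$ with $U,V$ nonempty disjoint open sets, and say $a\in U$. Writing $U' = U\cap X$, both $U'$ and $V$ are open in $X$ (each basis element of $\mathcal{T}_Y$ intersected with $X$ lies in $\mathcal{T}_X$), are disjoint, and have union $X$. Since $X$ is connected and $V\neq\emptyset$, we must have $U'=\emptyset$, so $\{a\} = U$ is open in $Y$. But then some basis element $\{a\}\cup A$ is contained in $\{a\}$, forcing $A=\emptyset$, contrary to the hypothesis $\emptyset\notin\mathcal{A}$.

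For the Hausdorff claim, two points of $X$ are separated by their $\mathcal{T}_X$-neighborhoods, which remain open in $Y$. To separate $x\in X$ from $a$, I would invoke the hypothesis to obtain an open $U$ containing $x$ and $A\in\mathcal{A}$ with $U\cap A=\emptyset$ (reading the hypothesis in the way consistent with its uses in the examples, which require $x\in U$); then $U$ and $\{a\}\cup A$ are disjoint neighborhoods of $x$ and $a$. For first countability, at $x\in X$ any countable $\mathcal{T}_X$-local basis continues to work, and at $a$ the collection $\mathcal{S}_a$ is itself countable and is a local basis because any open set containing $a$ must contain a basis element through $a$, and all such basis elements lie in $\mathcal{S}_a$. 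None of these steps is deep; the only points requiring care are reading the separation hypothesis correctly and noticing that $\emptyset\notin\mathcal{A}$ is exactly what forbids $\{a\}$ from being open in the connectedness argument.
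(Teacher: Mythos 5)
Your proposal is correct and follows essentially the same route as the paper's proof: the basis verification via closure of $\mathcal{T}_{X}\cup \mathcal{S}_{a}$ under finite intersections, the separation of $x$ from $a$ by $U$ and $\{a\}\cup A$, and the observation that $\mathcal{S}_{a}$ is a countable local basis at $a$. Your connectedness step is phrased as a contradiction with a clopen partition rather than the paper's remark that $X$ is dense in $Y$, but both rest on the identical observation that $\emptyset \notin \mathcal{A}$ forces every basic neighborhood of $a$ to meet $X$.
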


\begin{proof}
To check $\mathcal{T}_{X}\cup \mathcal{S}_{a}$ is a basis observe $\mathcal{T%
}_{X}\cup \mathcal{S}_{a}$ covers $Y$ since $\mathcal{A\neq \emptyset }$ and
hence $\mathcal{T}_{X}\cup \mathcal{S}_{a}$ is a subbasis. Thus sets of the
form $U_{1}\cap U_{2}...\cap U_{n}$ form a basis for a topology on $Y.$
Observe $\mathcal{T}_{X}\cup \mathcal{S}_{a}$ is closed under the operation
of finite intersections and thus $\mathcal{T}_{X}\cup \mathcal{S}_{a}$ is a
basis for $(Y,\mathcal{T}_{Y}).$ If $\emptyset \notin \mathcal{A}$ then $%
A=X\cap (\{a\}\cup A)\neq \emptyset $ for each basic open set $\{a\}\cup
A\in \mathcal{S}_{a}$ and thus $X$ is dense in $Y$ and hence $Y$ is
connected since $X$ is connected. Since $\mathcal{S}_{a}$ is a local basis
at $(Y,\mathcal{T}_{Y})$ is $T_{2}$ if for each $x\in X$ there exists $U\in 
\mathcal{T}_{X}$ and $A\in \mathcal{A}$ such that $U\cap A=\emptyset $ and
by also by definition $Y$ is first countable at $a$ if $\mathcal{S}_{a}$ is
countable.
\end{proof}

\begin{lemma}
\label{alex}Suppose $(X,\mathcal{T}_{X})$ is a 1st countable $T_{2}$ space
and $Y=X\cup \{b\}$ is the Alexandroff compactification of $X.$ Then $Y$ is
a compact KC-space. If $X$ is not locally compact then $Y$ is not Hausdorff.
If $X$ is connected and not compact, then $Y$ is connected.
\end{lemma}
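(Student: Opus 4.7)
The plan is to verify the four claims in order, leaning on first countability of $X$ only where it is genuinely needed (the KC property).

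First, compactness of $Y$ is the standard Alexandroff argument: given an open cover $\mathcal{G}$ of $Y$, pick $V \in \mathcal{G}$ with $b \in V$; then $Y \setminus V$ is compact and closed in $X$, so finitely many remaining sets cover it. For connectedness under the assumption that $X$ is connected and non-compact, suppose $Y = A \sqcup B$ with $A,B$ nonempty and open, and $b \in A$. Then $B \subseteq X$ is open in $X$. If $A \cap X$ were empty we would have $A = \{b\}$ open, which by the definition of $\mathcal{T}_Y$ forces $X = Y \setminus \{b\}$ to be compact, contradicting the hypothesis; and if $A \cap X \neq \emptyset$ then $X = (A \cap X) \sqcup B$ is a separation of $X$, again a contradiction.

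The failure of Hausdorffness when $X$ is not locally compact is the contrapositive: if $Y$ were $T_2$, then for each $x \in X$ one could separate $x$ from $b$ by disjoint open sets $U, V$ with $b \in V$. Writing $V = \{b\} \cup (X \setminus C)$ for some $C$ compact and closed in $X$, the closure $\overline{U \cap X}$ taken in $X$ sits inside $C$ and is closed in $X$, hence compact. Thus every $x$ would have a neighborhood with compact closure in $X$.

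The real work is the KC property. Let $K \subseteq Y$ be compact; I would split into two cases. If $b \notin K$, then $K \subseteq X$ is compact in the $T_2$ space $X$, so closed in $X$; its complement in $Y$ is $\{b\} \cup (X \setminus K)$, which lies in $\mathcal{T}_Y$ by definition of the Alexandroff topology. If $b \in K$, I need to show $K \cap X$ is closed in $X$. Suppose not: pick $x \in X \setminus K$ that is a limit point of $K \cap X$ in $X$, and use first countability of $X$ to obtain a sequence $(x_n) \subseteq K \cap X$ converging to $x$ in $X$. Since $X$ is $T_2$ and $x \notin K$, one may pass to a subsequence of pairwise distinct $x_n \neq x$. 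The set $F = \{x\} \cup \{x_n : n \geq 1\}$ is compact and closed in $X$, so $C = F \cup \{b\}$ is closed in $Y$ (its complement in $Y$ equals $X \setminus F$, which is open in $X$). Then $K \cap C$ is compact in $Y$ and equals $\{x_n : n \geq 1\} \cup \{b\}$.

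The main obstacle is engineering a contradiction from compactness of this last set: I would cover it by the open set $V = \{b\} \cup (X \setminus F) \in \mathcal{T}_Y$ together with open sets $U_n \subseteq X$ chosen so that $U_n \cap \{x_m : m \geq 1\} = \{x_n\}$ (possible since $x_m \to x \neq x_n$ and $X$ is $T_2$). The set $V$ misses every $x_n$, and each $U_n$ catches only one $x_n$, so no finite subcover exists, contradicting compactness. This forces $K \cap X$ closed in $X$, whence $Y \setminus K = X \setminus (K \cap X)$ is open in $X$ and therefore open in $Y$, completing the proof that $Y$ is KC.
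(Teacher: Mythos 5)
Your proof is correct and follows essentially the same route as the paper: compactness and connectedness are handled by the standard Alexandroff arguments, non-Hausdorffness by observing that a neighborhood of $x$ disjoint from a neighborhood of $b$ must have compact closure in $X$, and the KC property by using first countability to extract a convergent sequence in $K\cap X$ and then exhibiting an open cover with no finite subcover. The only differences are cosmetic (you argue the KC property directly on a compact $K$ and restrict to the closed set $F\cup\{b\}$, while the paper argues the contrapositive on a non-closed $C$), and both contradictions rest on the same sequence-plus-limit construction.
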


\begin{proof}
Since $X$ is $T_{2}$ compact sets in $X$ are closed in $X$ and hence $X$ is
a $KC.$

To check that $Y$ is a KC-space suppose $C\subset Y$ and $C$ is not closed
in $Y$. If $b\in \overline{C}\backslash C$ then $C$ is not a compact
subspace of $X$ (since otherwise $Y\backslash C$ shows $b$ is not a limit
point of $C).$ Since open sets in $X$ are also open in $Y$, it follows that $%
C$ is not a compact subspace of $Y.$ If $x\in X$ and $x\in \overline{C}%
\backslash C,$ since $X$ is 1st countable, obtain a countable collection of
open sets $\{U_{n}\}\subset \mathcal{T}_{X}$ such that $...U_{3}\subset
U_{2}\subset U_{1}$ and if $V$ is an open in $X$ and $x\in V$ there exists $%
U_{N}$ such that $x\in U_{N}\subset V$ Since $x$ is a limit point of $C$
select $c_{n}\in C\cap U_{n}$ and note $c_{n}\rightarrow x$ and the set $%
A=\{x,c_{1},c_{2},...\}$ is compact in $X$. Thus $\{c_{1},c_{2},...\}$ is
not a closed subspace of $X$ and hence since $X$ is a KC-space $%
\{c_{1},c_{2},...\}$ is not compact in $X.$ Obtain an open cover $\mathcal{G}%
\subset \mathcal{T}_{X}$ of $\{c_{1},c_{2},...\}$ such that no finite
subcover of $\mathcal{G}$ covers $\{c_{1},c_{2},...\}.$ Observe $\mathcal{G}%
\cup \{Y\backslash A\}$ covers $C$ and has no finite subcover. Thus $C$ is
not compact in $Y$ and hence $Y$ is a KC-space.

If $X$ is not locally compact obtain $a\in X$ such that $X$ is not locally
compact at $a.$ To see that $Y$ is not Hausdorff suppose $a\in U$ and $b\in V
$ and each of $U$ and $V$ are open in $Y.$ To obtain a contradiction suppose 
$U\cap V=\emptyset .$ Then $b\notin U$ and hence $U\subset X.$ By definition
of $Y,$ $U$ is open in $X.$ Hence $\overline{U}\cap X$ is not compact since $%
X$ is not locally compact at $a.$ Thus, since $Y$ is a KC-space, $\overline{U%
}\cap X$ is not closed in $Y.$ Hence $b$ is a limit point of $\overline{U}%
\cap X$ and $b\in \overline{U}\backslash U.$ Since $b$ is a limit point of $U
$ we obtain the contradiction $V\cap U\neq \emptyset .$ This shows $X$ is
not $T_{2}.$

If $X$ is connected and not locally compact, then $X$ not compact, and thus $%
b$ is not an isolated point of $Y,$ and hence the connected subspace $X$ is
dense in $X\cup \{b\}$ and hence $Y$ is connected.
\end{proof}

\begin{lemma}
\label{ezc}If $X$ is metrizable and $Y=X\cup \{\infty \}$ denotes the
Alexandroff compactification of $X,$ then $Y\times \lbrack 0,1]$ is a
KC-space. If $Y\times \lbrack 0,1]$ is a KC-space and $Z$ denotes the
quotient of $Y$ such that $(x,1)\symbol{126}(y,1)$ then $Z$ is a
contractible KC-space and $Z$ is locally contractible at the point
determined by $Y\times \{1\}.$
\end{lemma}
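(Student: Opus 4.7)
The plan is to prove the lemma's two implications in sequence.

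For the first implication, metrizability of $X$ gives first countability, so Lemma \ref{alex} makes $Y$ a compact KC-space. I then establish the broader fact that whenever $Y$ is KC and $K$ is compact Hausdorff, the product $Y\times K$ is KC. Given a compact $C\subset Y\times K$ and a point $(p,t)\notin C$, the slice $C\cap (\{p\}\times K)$ is compact in the Hausdorff space $\{p\}\times K$ and misses $(p,t)$, so normality of $K$ yields disjoint open sets $V_1,V_2\subset K$ with $t\in V_1$ and $\pi_2(C\cap (\{p\}\times K))\subset V_2$. This forces $C\cap (\{p\}\times V_1)=\emptyset$, so $\pi_1(C\cap (Y\times V_1))$ is a compact subset of $Y$ missing $p$; by KC of $Y$, its complement $U\subset Y$ is open, and $U\times V_1$ is an open neighborhood of $(p,t)$ disjoint from $C$. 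Taking $K=[0,1]$ gives the first conclusion.

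Now assume $Y\times [0,1]$ is KC, let $q$ be the quotient map onto $Z$, and write $v=q(Y\times\{1\})$ for the cone point. To show $Z$ is KC, I first observe that for each $s<1$ the set $Y\times [0,s]$ embeds as a closed subspace of $Z$ (its $q$-preimage is closed in $Y\times [0,1]$) and, as a subspace of the KC-space $Y\times [0,1]$, is itself KC. Given a compact $C\subset Z$ and a point $p\notin C$, consider two cases. If $v\notin C$, then compactness of $C\subset Y\times [0,1)$ forces $C\subset Y\times [0,s]$ for some $s<1$; since $Y\times [0,s]$ is KC, $C$ is closed there, hence closed in $Z$. If instead $v\in C$ and $p\neq v$, choose $\epsilon>0$ with $p\in Y\times [0,1-2\epsilon)$; then $C\cap (Y\times [0,1-\epsilon])$ is closed in $Z$ and does not contain $p$, so an open neighborhood of $p$ inside $Y\times [0,1-2\epsilon)$ that avoids this closed set is disjoint from all of $C$.

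Contractibility follows from the cone homotopy $\tilde H\colon Y\times [0,1]\times [0,1]\to Z$ defined by $\tilde H(x,t,s)=q(x,(1-s)t+s)$; this is continuous as a composition and is constant on the identifications made by $q\times\mathrm{id}_{[0,1]}$, so it descends set-theoretically to $H\colon Z\times [0,1]\to Z$ with $H(\cdot,0)=\mathrm{id}_Z$ and $H(\cdot,1)\equiv v$. Continuity of $H$ follows because $q\times\mathrm{id}_{[0,1]}$ is itself a quotient map, which is the standard consequence of $[0,1]$ being locally compact Hausdorff. For local contractibility at $v$, any open $W\ni v$ has $q^{-1}(W)\supset Y\times\{1\}$ open, so the tube lemma provides $\epsilon>0$ with $Y\times (1-\epsilon,1]\subset q^{-1}(W)$; letting $W'=q(Y\times (1-\epsilon,1])\subset W$, the inequality $(1-s)t+s>1-\epsilon$ for $t>1-\epsilon$ and $s\in [0,1]$ shows $H$ restricts to a contraction of $W'$ to $v$ within $W'$. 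The one delicate point throughout is the continuity of $H$, which rests on the non-trivial fact that $q\times\mathrm{id}_{[0,1]}$ is a quotient map; everything else is careful bookkeeping with the KC property, closed cylindrical subsets $Y\times [0,s]$, and the tube lemma.
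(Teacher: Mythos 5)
Your overall architecture is sound and in places more careful than the paper's own proof, but there is one genuine gap in the first step. In your argument that $Y\times K$ is KC whenever $Y$ is KC and $K$ is compact Hausdorff, you assert that $\pi_1(C\cap (Y\times V_1))$ is a compact subset of $Y$. It need not be: $Y\times V_1$ is open, so $C\cap (Y\times V_1)$ is merely an open subset of the compact set $C$, and neither it nor its projection need be compact (consider the diagonal of $[0,1]^2$ intersected with $[0,1]\times (a,b)$, whose first projection is the non-compact interval $(a,b)$). The repair is immediate with the sets you already have: since $V_1\cap V_2=\emptyset$ and $V_2$ is open, $\overline{V_1}\cap V_2=\emptyset$, so $\overline{V_1}$ still misses $\pi_2(C\cap (\{p\}\times K))$; now $C\cap (Y\times \overline{V_1})$ is closed in $C$, hence compact, its projection $E$ to $Y$ is compact, hence closed because $Y$ is KC, and $p\notin E$, so $(Y\setminus E)\times V_1$ is an open neighborhood of $(p,t)$ disjoint from $C$. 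With that correction the step is valid.

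Beyond this, your route to the first claim is genuinely different from the paper's. The paper uses metrizability of $X$ to extract convergent sequences witnessing non-compactness of a non-closed set, with a case analysis according to where the offending limit point sits relative to $\{\infty\}\times [0,1]$; you instead prove the cleaner general statement that the product of a KC space with a compact Hausdorff space is KC (your argument, once repaired, in fact only uses that $K$ is Hausdorff), and metrizability enters only through Lemma \ref{alex} to make $Y$ KC. This buys generality and avoids sequences entirely. Your proof that $Z$ is KC via the closed saturated cylinders $Y\times [0,s]$ is in the same spirit as the paper's reduction to $Y\times [0,1)$ but is more explicit about why non-closedness at a point away from the cone point, or at the cone point, each yield a separating neighborhood. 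Finally, your handling of the contraction is more careful than the paper's: the paper asserts the induced homotopy on $Z$ is continuous simply because $q$ is a quotient map, whereas the relevant fact is that $q\times \mathrm{id}_{[0,1]}$ is a quotient map because $[0,1]$ is locally compact Hausdorff, which you state explicitly; your tube-lemma argument for local contractibility at the cone point (using compactness of $Y$) is also correct.
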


\begin{proof}
To check that $Y\times \lbrack 0,1]$ is a KC-space suppose $C\subset Y\times
\lbrack 0,1]$ and suppose $C$ is not closed in $Y$ and suppose $z\in 
\overline{C}\backslash C.$ If $z\in X\times \lbrack 0,1]$ then, since $%
X\times \lbrack 0,1]$ is an open metrizable subspace of $Y\times \lbrack 0,1]
$, name a sequence $z_{n}\rightarrow z$ such that $z_{n}\in C.$ Note $%
\{z_{1},z_{2},..\}$ is a noncompact closed subspace of the space $C$ and
conclude $C$ cannot be compact. We have reduced to the case that $C\cap X$
is closed in $X$ and $z\in \{\infty \}\times \lbrack 0,1].$ If there exists $%
z_{n}\in (\{\infty \}\times \lbrack 0,1])\cap C$ such that $z_{n}\rightarrow
z$ then $C$ is not compact since $C\cap \{z_{1},z_{2},..\}$ is closed in $C$
and not compact. Thus to check the final case we assume $z$ is isolated in $%
C\cap \{\{\infty \}\times \lbrack 0,1]\}.$ Obtain a closed interval $[c,d]$
such that $\{z\}=(\{\infty \}\times \lbrack c,d])\cap C$ and $\{z\}\in
\{\infty \}\times (c,d).$ Let $D=C\cap (X\times \lbrack c,d])$ and observe $D
$ is a closed subspace of $C.$ Let $E$ denote the image of $D$ under the
natural projection $\Pi :D\rightarrow X.$ To see why $D$ cannot be compact,
note if $D$ were compact then $E$ is compact in $X$, and if $U=Y\backslash E$
then the open set $U\times (c,d)$ would show $z$ is not a limit point of $C,$
a contradiction. Thus since $D$ is a closed noncompact subspace of $C,$ we
conclude $C$ is not compact and hence $Y\times \lbrack 0,1]$ is a KC-space.

To check that $Z$ is a KC-space suppose $C\subset Z$ and suppose $C$ is not
closed. If $C\cap (Y\times \lbrack 0,1))$ is not closed in $Y\times \lbrack
0,1),$ then $C$ is not compact since $Y\times \lbrack 0,1)$ is a KC-space.
We have reduced to the case that $C$ is closed in $Y\times \lbrack 0,1).$ By
definition of the quotient topology, the natural quotient map $q:Y\times
\lbrack 0,1]\rightarrow Z$ shows $C$ is not closed in $Y\times \lbrack 0,1]$
and thus $C$ is not compact, and hence $q(C)=C$ is not compact in $Z.$ Thus $%
Z$ is a KC-space.

Ignoring specific properties of the spaces $Y$ and $Z,$ since $q$ is a
quotient map, the natural strong deformation retraction from $Y\times
\lbrack 0,1]$ onto $Y\times \{1\}$ induces a contraction of $Z$ and the
contraction shows that $Z$ is locally contractible at the special point.
\end{proof}

\section{Conclusions}

Theorems \ref{t2rite} and \ref{t2roid} show that every locally path
connected KC-space which contains no simple closed curve is $T_{2}$, and
every generalized hereditarily unicoherent KC-space which is locally
connected by continua is $T_{2}.$

It follows directly from Corollary \ref{ritecor} that every contractible,
locally contractible, 1-dimensional KC-space is $T_{2}.$ However subsection %
\ref{2dnice} shows there exists a 2-dimensional non-Hausdorff $KC$ compactum 
$Y$ such that $Y$ is both locally contractible and contractible.

The example $Y$ in subsection \ref{2d} is compact, 2-dimensional, locally
contractible, and $n$-connected for all $n$ but $Y$ has not been shown to be 
$Y$ contractible. If $Y$ fails to be contractible then $Y$ would amplify
another potential contrast between finite dimensional KC-spaces and finite
dimensional metric spaces (since every locally contractible $n-$connected
compact metric space is contractible).

For each non-Hausdorff KC-space $Y$ constructed in this paper there exists a
sequence of compact connected subspaces $...A_{3}\subset A_{2}\subset
A_{1}\subset Y$ such that $\cap _{n=1}^{\infty }A_{n}$ is not connected.

Examples are constructed in subsections \ref{1n}, \ref{1uni}, \ref{1cont},
and \ref{1lcc} of 1-dimensional non-Hausdorff KC-spaces in order to show
that slight weakening of the hypotheses of Theorems \ref{t2rite} and \ref%
{t2roid} can cause the $T_{2}$ conclusion to fail. The theorems and examples
lead naturally to the unsettled question of whether or not every
contractible 1-dimensional KC-space is Hausdorff.

\end{document}